\DeclareMathOperator{\rank}{rank}
\DeclareMathOperator{\diag}{diag}
\DeclareMathOperator{\orb}{O}
\DeclareMathOperator{\rev}{rev}
\DeclareMathOperator{\POL}{POL}
\DeclareMathOperator{\PEN}{PEN}
\DeclareMathOperator{\vect}{vec}
\DeclareMathOperator{\dist}{dist}
\newcommand{\sdotsss}%
{\text{\raisebox{-2.2pt}{$\cdot\,$}%
 \raisebox{1.7pt}{$\cdot$}%
\raisebox{5.6pt}{$\,\cdot$}}}
\renewcommand{\le}{\leqslant}
\renewcommand{\ge}{\geqslant}
\newtheorem{theorem}{Theorem}[section]
\newtheorem{lemma}[theorem]{Lemma}
\newtheorem{definition}[theorem]{Definition}
\newtheorem{example}[theorem]{Example}
\newcommand{\hide}[1]{}
\begin{document}

\title{Distance to nearest skew-symmetric matrix polynomials of bounded rank \vskip 0.5cm
\small{\it Dedicated to Paul Van Dooren on the occasion of his 75th birthday}}


\date{}

\author[um]{Andrii Dmytryshyn}
\ead{andrii@chalmers.se}

\author[cm]{Froil\'an M. Dopico}
\ead{dopico@math.uc3m.es}

\author[um]{Rakel Hellberg}
\ead{rakel.hellberg@outlook.com}

\address[um]{Department of Mathematical Sciences, Chalmers University of Technology and University of Gothenburg, Gothenburg, Sweden.}

\address[cm]{Universidad Carlos III de Madrid, ROR: https://ror.org/03ths8210, Departamento de Matem\'aticas, Avenida de la Universidad 30, 28911, Legan\'es, Spain.}

\begin{abstract}
We propose an algorithm that approximates a given matrix polynomial of degree $d$ by another skew-symmetric matrix polynomial of a specified rank and degree at most $d$. The algorithm is built on recent advances in the theory of generic eigenstructures and factorizations for skew-symmetric matrix polynomials of bounded rank and degree. Taking into account that the rank of a skew-symmetric matrix polynomial is even, the algorithm works for any prescribed even rank greater than or equal to $2$ and produces a skew-symmetric matrix polynomial of that exact rank. We also adapt the algorithm for matrix pencils to achieve a better performance. Lastly, we present numerical experiments for testing our algorithms and for comparison to the previously known ones.
\end{abstract}

\begin{keyword}
complete eigenstructure\sep genericity\sep  low rank approximation \sep matrix nearness problems \sep matrix polynomials\sep skew-symmetry\sep normal rank\sep orbits\sep pencils

\MSC 65F55 \sep 15A18\sep 15A21
\end{keyword}

\maketitle

\section{Introduction}

Matrix nearness problems—quantifying the minimal perturbation needed to endow a matrix or matrix-valued function with a desired property—are fundamental in numerical linear algebra \cite{GuLu25,HighamNear88,KrVo15}. Among these, determining the distance to the nearest singular matrix polynomial (including pencils) has garnered significant attention \cite{ByHM98,DaBo23,DoNN23,GiHL17,GnGu23,GNNP24,GuLM17} due to its relevance in control theory, system identification, and the analysis of differential-algebraic equations. 



In this work, we focus on a structured and more general variant of the distance to the singularity problem, namely, on computing the distance of a given matrix polynomial of degree $d$ to the nearest skew-symmetric matrix polynomial of a given bounded rank and degree at most $d$ and on computing such a nearest polynomial. Note that we impose a stronger condition than just the singularity on such nearest polynomial and that the distance to the singularity problem is a particular case of the problem we consider. Skew-symmetric matrix polynomials arise naturally in Hamiltonian systems, gyroscopic dynamics, and other physical models where antisymmetric structure encodes conservation laws or symmetries. 
Moreover, canonical forms of skew-symmetric matrix pencils have been studied in \cite{Rodm07, Thom91}, low-rank perturbations in \cite{Batz16}, miniversal deformations in \cite{Dmyt16}, codimensions of their orbits were computed in \cite{DmKS13}, and their stratifications were developed in \cite{DaDm25,Dmyt17,DmKa14}.

Two recent methodological advances provide powerful tools for addressing the distance to the singularity problem for matrix polynomials (among the other challenges they also address). The first, introduced by Gnazzo and Guglielmi in \cite{GnGu23,GnGu25}, considers the distance to singularity for general nonlinear matrix-valued functions. Their approach reformulates the problem as a non-convex optimization task, encoding singularity as a constraint. 
%
Another approach, called the Riemann-Oracle \cite{GNNP24}, developed by Gnazzo, Noferini, Nyman, and Poloni, offers a general-purpose Riemannian optimization strategy for a broad class of matrix nearness problems. 

In this paper, we follow the approach of \cite{DDDG26} and build on recent advances in the theory of generic eigenstructures and factorizations for skew-symmetric matrix polynomials of bounded rank and degree \cite{DmDV23,DmDV26}. Notably, our approach is completely different from those in \cite{GnGu23,GnGu25,GNNP24} and allows us to consider the distance to matrix polynomials of any specified rank in the same way as the distance to the singularity. 



During recent years, the problem of determining the generic eigenstructures for sets of matrix polynomials (including pencils) with fixed grade and bounded rank, has been intensively investigated. The description of such sets as a union of closures of a few ``generic sets'' of matrix polynomials with simple eigenstructures has been given for general matrix polynomials \cite{DmDo17} (see also \cite{DeDo08} for the result on pencils), for skew-symmetric matrix polynomials \cite{DeDD24,DmDo18} (including pencils), as well as for symmetric matrix polynomials of odd grade \cite{DeDD20b} (see also \cite{DeDD20a}  for the result on pencils). Moreover, the cases of matrix pencils that are $T$-palindromic and $T$-alternating \cite{DeTe18}, and Hermitian \cite{DeDD22} are also known. In this paper we develop a parametrization of the generic sets for skew-symmetric matrix polynomials with fixed grade and bounded rank in terms of factorizations with very specific properties, similarly to how it was done in \cite{DmDV23} for general matrix polynomials. This parametrization allows us to develop a simple and efficient algorithm for solving the distance problem mentioned above.



The rest of the paper is organized as follows. We begin by providing, in Section~\ref{sect.prempolys}, the necessary background on skew-symmetric matrix polynomials, introducing key definitions and properties that will be used throughout the paper. Section \ref{sec.genfacts} presents the announced parametrization of the generic sets for skew-symmetric matrix polynomials with fixed grade and bounded rank in terms of factorizations. In Section~\ref{sec:poldist}, we present a general solution to the distance problem for skew-symmetric matrix polynomials. Then, in Section~\ref{pencils}, we review the relevant background on skew-symmetric matrix pencils, i.e., degree-one skew-symmetric matrix polynomials. In Section~\ref{sec:distpen}, we adapt our algorithm from Section~\ref{sec:poldist} to the case of skew-symmetric matrix pencils and improve its computational efficiency. Finally, Section~\ref{sect.num} presents numerical experiments.

\section{Skew-symmetric matrix polynomials}
\label{sect.prempolys}

We consider skew-symmetric $m\times m$ matrix polynomials $P(\lambda)$ of grade $d$, i.e., of degree less than or equal to $d$, over the field of complex numbers~$\mathbb C$:
\begin{equation*}
P(\lambda) = \lambda^{d}A_{d} + \dots +  \lambda A_1 + A_0,
\quad A_i^T=-A_i, \ A_i \in \mathbb C^{m \times m} \ \text{for } i=0, \dots, d.
\end{equation*}
Recall that the degree of $P(\lambda)$, denoted as $\deg(P)$, is the largest index $k$ such that $A_k \ne 0$.
 By $\POL_{d, m}^{ss}$ we denote the vector space of $m \times m$ skew-symmetric matrix polynomials of grade $d$.
 If there is no risk of confusion, we write $\POL$ instead of $\POL_{d, m}^{ss}$. Using the Frobenius matrix norm of complex matrices \cite{High02}, a distance in $\POL_{d, m}^{ss}$ is defined as $d(P,P') = \left( \sum_{i=0}^d || A_i - A'_i ||_F^2 \right)^{\frac{1}{2}}$, where $P(\lambda) = \sum_{i=0}^d \lambda^i A_i$ and $P'(\lambda) = \sum_{i=0}^d \lambda^i A'_i$, making $\POL_{d, m}^{ss}$ a metric space  with the Euclidean topology induced by this distance. We define the Frobenius norm of the matrix polynomial $P$ as $||P(\lambda)||_F = \left( \sum_{i=0}^d || A_i ||_F^2 \right)^{\frac{1}{2}}$. The fact that $\POL_{d, m}^{ss}$ is a metric space allows us to consider the closure of any of its subsets $\mathcal{S}$, which is denoted by $\overline{\mathcal{S}}$.

Two matrix polynomials $P(\lambda)$ and $Q(\lambda)$ are called {\it unimodularly congruent} if $F(\lambda)^T P(\lambda) F(\lambda)=Q(\lambda)$ for some unimodular matrix polynomial $F(\lambda)$ (i.e. $\det F(\lambda) \in \mathbb C \backslash \{0\}$), see also \cite{MMMM13}.
\begin{theorem}{\rm \cite{MMMM13}} \label{tsmiths}
Let $P(\lambda)$ be a skew-symmetric $m\times m$ matrix polynomial. Then there exist $r \in \mathbb N$ with $2r \le m$ and a unimodular matrix polynomial $F(\lambda)$ such that
\begin{equation*}
F(\lambda)^T P(\lambda) F(\lambda) =
\begin{bmatrix}
0 & g_1(\lambda)\\
-g_1(\lambda) & 0
\end{bmatrix} \oplus
\dots
\oplus
\begin{bmatrix}
0 & g_r(\lambda)\\
-g_r(\lambda) & 0
\end{bmatrix} \oplus
0_{m-2r}=:S(\lambda),
\end{equation*}
where $g_j$ is a monic polynomial, for $j=1, \dots, r$, and $g_j(\lambda)$ divides $g_{j+1}(\lambda)$, for $j=1, \dots, r-1$. Moreover, the canonical form $S(\lambda)$ is unique.
\end{theorem}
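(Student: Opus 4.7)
The plan is to proceed by induction on the matrix size $m$, combined with a minimality principle. The base cases $m\le 1$ hold trivially since every skew-symmetric polynomial of such size is identically zero (take $r=0$). For the inductive step, if $P(\lambda)\equiv 0$ take $r=0$; otherwise set $\delta=\min\{\deg Q_{ij}(\lambda):Q\text{ is unimodularly congruent to }P,\;Q_{ij}(\lambda)\ne 0\}$ and choose a representative $Q$ of the orbit attaining this minimum. After renaming $Q$ as $P$, a degree-$\delta$ entry lies off the diagonal (the diagonal vanishes by skew-symmetry); simultaneous row/column permutations bring it to position $(1,2)$ and a scalar rescaling makes it a monic polynomial $g_1(\lambda)$ of degree $\delta$.

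For each $k\ge 3$, Euclidean division writes the $(1,k)$ entry as $q_k g_1+r_k$ with $\deg r_k<\deg g_1$; the elementary congruence that subtracts $q_k$ times column $2$ from column $k$ (with the matching row operation) preserves skew-symmetry and replaces the $(1,k)$ entry by $r_k$. Since any nonzero $r_k$ would contradict the minimality of $\delta$, each $r_k$ vanishes. The analogous sweep across row $2$ also produces only zero remainders, so the first two rows and columns decouple from a bottom-right $(m-2)\times(m-2)$ skew-symmetric polynomial $P'(\lambda)$, leaving the $2\times 2$ block with $(1,2)$-entry $g_1$.

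The main obstacle I foresee is establishing the divisibility chain $g_1\mid g_2\mid\cdots\mid g_r$. By the inductive hypothesis $P'$ is unimodularly congruent to a canonical form whose blocks already satisfy $g_2\mid\cdots\mid g_r$, so only $g_1\mid g_2$ remains; my plan is to show that $g_1$ divides every entry of $P'$, which forces $g_1$ to divide the first invariant factor of $P'$, namely $g_2$. Suppose for contradiction that some $P'_{i,k}=qg_1+s$ with $s\ne 0$ and $\deg s<\deg g_1$. The congruence $F=I+E_{i+2,1}$ adds row $i+2$ to row $1$ and column $i+2$ to column $1$ without disturbing $(1,2)=g_1$, moving $P'_{i,k}$ into position $(1,k+2)$; one further Euclidean reduction (subtracting $q$ times column $2$ from column $k+2$, with its row partner) replaces that entry by $s$. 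The resulting matrix is unimodularly congruent to the original $P$ yet contains a nonzero entry of degree $\deg s<\delta$, contradicting the choice of $\delta$.

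For uniqueness, I invoke the classical Smith form theory over $\mathbb{C}[\lambda]$. Unimodular congruence is a special case of unimodular equivalence, so $P(\lambda)$ and $S(\lambda)$ share the same invariant factors; reading the Smith form off the block-diagonal structure of $S(\lambda)$ yields the list $g_1,g_1,g_2,g_2,\ldots,g_r,g_r,0,\ldots,0$, which is a proper invariant-factor chain precisely because $g_j\mid g_{j+1}$. The monic polynomials $g_j$ are therefore uniquely determined by $P$.
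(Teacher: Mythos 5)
The paper does not prove this theorem at all: it is quoted as a known result from \cite{MMMM13}, so there is no internal proof to compare against. Judged on its own, your argument is correct and complete in outline: it is the classical structure-theorem proof for alternating forms over the PID $\mathbb{C}[\lambda]$ (minimal-degree pivot over the congruence orbit, Euclidean clearing of the first two rows and columns, induction on the size, and Smith-form theory for uniqueness), and all the delicate points check out --- the clearing congruences preserve skew-symmetry and do not disturb the pivot because the diagonal entries vanish, the remainders must vanish by minimality of $\delta$, and the $I+E_{i+2,1}$ trick correctly imports an arbitrary entry of $P'$ into row $1$ so that a nonzero remainder would contradict minimality, giving $g_1\mid g_2$ via the fact that the first invariant polynomial of $P'$ is the gcd of its entries. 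Only cosmetic points deserve a word in a written version: note that a nonzero entry of $P'$ necessarily has $i\neq k$ (its diagonal vanishes), handle the trivial cases $P\equiv 0$ and $P'\equiv 0$ explicitly, and in the uniqueness part observe that $r$ itself is fixed because $2r=\operatorname{rank} P$, while the $g_j$ are fixed as the invariant polynomials; also recall that congruence preserves skew-symmetry, so the scalar rescaling should scale row and column $1$ simultaneously (which still makes the $(1,2)$ entry monic). Your route is more elementary and self-contained than simply invoking \cite{MMMM13}; what the citation buys the paper is brevity, while your proof buys independence from the compound-matrix/Smith-form machinery developed there.
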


The $2r$ monic scalar polynomials $g_1(\lambda), g_1(\lambda), g_2(\lambda), g_2(\lambda), \dots,g_r(\lambda), g_r(\lambda)$ in Theorem \ref{tsmiths} are called the {\it invariant polynomials} of $P(\lambda)$, and, for any $\alpha \in \mathbb{C}$, each of them can be uniquely factored as
$$
g_j(\lambda)  = (\lambda - \alpha)^{\sigma_j} p_j (\lambda), \quad \text{ with } p_j(\alpha) \ne 0
$$
and $\sigma_j \ge 0$ { being} an integer. The sequence $0 \leq \sigma_1 = \sigma_1 \leq \sigma_2 = \sigma_2 \leq \cdots \leq \sigma_r = \sigma_r$ is called {\it the sequence of partial multiplicities} of $P(\lambda)$ at $\alpha$. The number $\alpha$ is a finite eigenvalue of $P(\lambda)$ if the partial multiplicity sequence at $\alpha$ contains at least two nonzero terms, or, equivalently, if $\alpha$ is a root of at least one invariant polynomial $g_j (\lambda)$. The {\it elementary divisors} of $P(\lambda)$ associated with a finite eigenvalue $\alpha$ is the collection of factors $(\lambda - \alpha)^{\sigma_1},(\lambda - \alpha)^{\sigma_1}, (\lambda - \alpha)^{\sigma_2},(\lambda - \alpha)^{\sigma_2}, \ldots , (\lambda - \alpha)^{\sigma_r}, (\lambda - \alpha)^{\sigma_r}$ for which $\sigma_j > 0$.
The (normal) rank of the skew-symmetric matrix polynomial $P(\lambda)$ in Theorem \ref{tsmiths} is equal to the nonnegative integer $2r$, i.e., the normal rank is always even.
Note also that a square matrix polynomial $P(\lambda)$ is singular if and only if $\det(P(\lambda)) = 0$, for all $\lambda$.

For completion, we recall that $\lambda = \infty$ is an eigenvalue of the matrix polynomial $P(\lambda) \in \POL^{ss}_{d, m}$ if zero is an eigenvalue of $\rev_d P(\lambda):= \lambda^d P(1/\lambda)$. The elementary divisors and partial multiplicities for the zero eigenvalue of $\rev_d P(\lambda)$ are the elementary divisors and partial multiplicities associated with the infinite eigenvalue of $P(\lambda)$.


For an $m\times n$ matrix polynomial $P(\lambda)$, the left and right null-spaces, over the field of rational functions $\mathbb C(\lambda)$, are defined as follows:
\begin{align*}
{\cal N}_{\rm left}(P)&:= \{y(\lambda)^T \in \mathbb C(\lambda)^{1 \times m}: y(\lambda)^TP(\lambda) = 0_{1\times n} \}, \\
{\cal N}_{\rm right}(P)&:= \{x(\lambda) \in \mathbb C(\lambda)^{n\times 1}: P(\lambda)x(\lambda) = 0_{m\times 1}\}.
\end{align*}
Each subspace ${\cal V}$ of $\mathbb C(\lambda)^n$ has bases consisting entirely of vector polynomials.
A basis of ${\cal V}$ consisting of vector polynomials whose sum of degrees is minimal among all bases of ${\cal V}$ consisting of vector polynomials is called a {\it minimal basis} of ${\cal V}$. The ordered list of degrees of the vector polynomials in any minimal basis of ${\cal V}$ is always the same. These degrees are called the minimal indices of ${\cal V}$ \cite{Forn75,Kail80}. Thus we can define the left and right {\it minimal indices} of a matrix polynomial $P(\lambda)$ as those of ${\cal N}_{\rm left}(P)$ and ${\cal N}_{\rm right}(P)$, respectively. Note that for a skew-symmetric matrix polynomial the left minimal indices are equal to the right ones.

Following \cite{DeDD24,Dmyt17,DmDo18}, we define the {\it complete eigenstructure} of a skew-symmetric matrix polynomial $P(\lambda) \in \POL^{ss}_{d, m}$ {\it of grade $d$} to be the collection of all finite and infinite eigenvalues, the corresponding elementary divisors (or, equivalently, the corresponding sequences of partial multiplicities), and the left and right minimal indices of $P(\lambda)$. For convenience, we also define the notion of orbit of a skew-symmetric matrix polynomial $P(\lambda) \in \POL^{ss}_{d, m}$, denoted $\orb(P(\lambda))$, as the subset of matrix polynomials in $\POL^{ss}_{d, m}$ with the same complete eigenstructure as $P(\lambda)$. 
Observe that all the polynomials in $\orb (P)$ have the same {\it degree} and the same rank. 
Note that the definition of orbit for general polynomials is analogous and given in, e.g., \cite{DmDo17,DJKV20}.

The subset of matrix polynomials in $\POL^{ss}_{d, \, m}$ with normal rank at most $2r$ will be denoted by $\POL^{ss}_{d, \, 2r, \,  m}$, or simply by $\POL_{2r}$ when there is no need to specify the other parameters or they are clear from the context. A simple description of $\POL^{ss}_{d, \, 2r, \, m}$---as the closure of a certain “generic set” characterized by its eigenstructure---was obtained in \cite{DeDD24,DmDo18} and is recalled in Theorem~\ref{anydth}. Such descriptions are also known for matrix polynomials with other types of structures \cite{DeDD20b,DmDo17}. 
\begin{theorem} {\rm \cite{DeDD24}} \label{anydth}
Let $m,r$, and $d$ be integers such that $m > 2$, $d \geq 1$, and $2 \leq 2r \leq (m-1)$.
The set of $m\times m$ complex skew-symmetric matrix polynomials of grade $d$ with rank at most $2r$ is a closed subset of $\POL_{d, m}^{ss}$ equal to $\overline{\orb (W)}$, where { $W \in \POL^{ss}_{d, m}$} is an $m \times m$ complex skew-symmetric matrix polynomial of degree exactly $d$ and rank exactly $2r$ with no elementary divisors at all, with $t$ left minimal indices equal to $(\beta +1)$ and with $(m-2r-t)$ left minimal indices equal to $\beta$, where $\beta = \lfloor rd / (m-2r) \rfloor$ and $t = rd \mod (m-2r)$, and with the right minimal indices equal to the left minimal indices.
\end{theorem}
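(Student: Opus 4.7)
The plan has two parts: showing that $\POL^{ss}_{d,2r,m}$ is closed, and then identifying it with $\overline{\orb(W)}$. Closedness is the straightforward part: the condition $\nrk P(\lambda) \le 2r$ is equivalent to the vanishing of all $(2r+1) \times (2r+1)$ minors of $P(\lambda)$ as polynomials in $\lambda$. Expanding each such minor yields a polynomial in $\lambda$ of degree at most $(2r+1)d$ whose coefficients are polynomial functions of the entries of $A_0,\dots,A_d$; the simultaneous vanishing of all these coefficient functions is a finite system of polynomial equations, making $\POL^{ss}_{d,2r,m}$ Zariski closed and hence closed in the Euclidean topology. Since $W\in\POL^{ss}_{d,2r,m}$, this already gives $\overline{\orb(W)}\subseteq\POL^{ss}_{d,2r,m}$.

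For the reverse inclusion I would follow the standard strategy used for analogous generic-set results for general and structured matrix polynomials (see, e.g., \cite{DmDo17,DmDo18}). First, one reduces to polynomials of rank exactly $2r$ and degree exactly $d$: any $P\in\POL^{ss}_{d,2r,m}$ can be approximated by such polynomials by perturbing $P$ with a small generic rank-$2r$ skew-symmetric polynomial, using lower semicontinuity of the rank. Then, inside the subset of polynomials of rank exactly $2r$ and degree exactly $d$, the claim becomes that $\orb(W)$ is dense. This is established by a codimension comparison: using the codimension formulas for orbits of skew-symmetric matrix polynomials (which are derived from the pencil-level formulas in \cite{DmKS13} via an appropriate linearization), one shows that any other eigenstructure compatible with rank at most $2r$ and grade $d$ produces an orbit of strictly larger codimension, hence contained in $\overline{\orb(W)}$.

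The specific parameters $\beta$ and $t$ in the description of $W$ are forced as follows. A rank-$2r$ skew-symmetric polynomial in $\POL^{ss}_{d,m}$ has exactly $m-2r$ left minimal indices and, by skew-symmetry, the same right minimal indices. The index sum theorem applied to $W$ (no elementary divisors, finite or infinite) gives $2\sum_{i=1}^{m-2r}\epsilon_i = 2rd$, so the minimal indices satisfy $\sum_i \epsilon_i = rd$. Among all non-negative integer partitions of $rd$ into $m-2r$ parts, the \emph{balanced} partition---whose parts differ by at most one---minimises the codimension; this yields $m-2r-t$ parts equal to $\beta=\lfloor rd/(m-2r)\rfloor$ and $t=rd \bmod (m-2r)$ parts equal to $\beta+1$.

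The main obstacle, and the most delicate step, is the codimension comparison: one must verify monotonicity of the codimension formula under the two competing moves---unbalancing the minimal indices and introducing elementary divisors---and show that both strictly increase the codimension away from $W$. The majorization order on partitions provides the right combinatorial framework for handling the minimal-index reshuffling, while the fact that each elementary divisor of a skew-symmetric polynomial comes paired (so contributes a positive, structure-imposed amount to the codimension) handles the second. Once these two monotonicity statements are in place, no orbit other than $\orb(W)$ can be of full dimension inside $\POL^{ss}_{d,2r,m}$, and the density of $\orb(W)$---hence the equality $\POL^{ss}_{d,2r,m}=\overline{\orb(W)}$---follows.
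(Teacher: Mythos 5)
First, a remark on the comparison itself: the paper does not prove Theorem~\ref{anydth}; it is recalled verbatim from \cite{DeDD24} (with the pencil case in \cite{DmDo18}), so your proposal can only be measured against the strategy of those works, not against an in-paper argument. Your first part (closedness via the vanishing of the coefficients of all $(2r+1)\times(2r+1)$ minors) and your derivation of the parameters $\beta$ and $t$ from the index sum theorem together with the fact that a rank-$2r$ skew-symmetric $m\times m$ polynomial has $m-2r$ left minimal indices equal to the right ones are correct and match the standard reasoning.

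The genuine gap is in the core step: you deduce ``any other eigenstructure gives an orbit of strictly larger codimension, hence contained in $\overline{\orb(W)}$.'' That implication is false in general. A dimension count can show that $\orb(W)$ is the unique orbit of maximal dimension inside $\POL^{ss}_{d,2r,m}$, but it cannot show that every other admissible orbit lies in its closure unless you also know that $\POL^{ss}_{d,2r,m}$ is irreducible (or you construct the degenerations explicitly). The unstructured case makes this concrete: the set of $m\times n$ polynomials of grade $d$ and rank at most $r$ is the union of $rd+1$ distinct maximal orbit closures \cite{DmDo17,DeDo08}, so dimension comparisons alone can never decide between ``one closure'' and ``several.'' The actual proofs establish the closure inclusions either through explicit low-rank perturbations/degenerations or through the stratification (orbit-closure ordering) results for skew-symmetric pencils \cite{DmKS13,DmKa14} transported to polynomials; alternatively, irreducibility can be obtained from a parametrization by factorizations as in \cite{DmDV23} and Theorem~\ref{thm.mainskew}, but in this paper that parametrization is itself proved \emph{from} Theorem~\ref{anydth}, so invoking it here would be circular. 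Two further points need repair: the reduction to rank exactly $2r$ and degree exactly $d$ should perturb a rank-$2r'$ polynomial by a small generic skew-symmetric polynomial of rank $2(r-r')$ (adding a rank-$2r$ term can push the rank above $2r$), and the degree adjustment must be done inside the low-rank set, not by an arbitrary perturbation of the leading coefficient; and the route ``codimension formulas via structured linearization of \cite{DmKS13}'' is only available for odd grade, since skew-symmetric strong linearizations are not available for even grade---this is precisely the obstacle that makes \cite{DeDD24} a separate, nontrivial contribution beyond \cite{DmDo18}.
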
 

Theorem \ref{anydth} is one of the main tools for obtaining another simple description of $\POL^{ss}_{d, \, 2r, \,  m}$ as the closure of a certain “generic set” of skew-symmetric matrix polynomials characterized by the simple way in which they can be factorized. This simple description is obtained in Section \ref{sec.genfacts} and is the key result on which the algorithm we propose is based on.

\section{Generic factorizations of skew-symmetric matrix polynomials with bounded rank} \label{sec.genfacts}
Theorem \ref{thm.mainskew} is a key result for the development of the algorithm presented in this paper. It is one of the results included in \cite{DmDV26}. Although the statement of Theorem \ref{thm.mainskew} is very simple, its proof is long, relies on several preliminary lemmas, and is based on results in \cite{DmDV23}. Therefore, the proof is included in Subsection \ref{subsec.proofmain}, which can be ommitted without affecting the understanding of the rest of the paper. In Theorem \ref{thm.mainskew}, as well as in other results, some general matrix polynomials (i.e., not necessarily skew-symmetric) are used, which motivates the introduction of the following notation: $\POL^{p\times q}_d$ is the vector space of $p\times q$ matrix polynomials of grade $d$ over $\mathbb{C}$.

\begin{theorem} \label{thm.mainskew} Let $m,r$ and $d$ be integers such that $d\geq 1$ and $2 \leq 2r \leq (m-1)$. Let us define the following subset of $\POL^{ss}_{d, \, 2r, \,  m}$
$$
\mathcal{G}^{ss}_{d, \, 2r, \, m}  :=  \left\{U(\lambda) V(\lambda)^T - V(\lambda) U(\lambda)^T  \, : \,
\begin{array}{l} \displaystyle
V(\lambda) \in \POL^{m\times r}_{\displaystyle \left\lceil d/2\right\rceil}, \\[0.4 cm] \displaystyle
U(\lambda) \in \POL^{m\times r}_{\displaystyle \left\lfloor d/2\right\rfloor}
\end{array}
\right\}  \, .
$$
Then,
$$
\overline{\mathcal{G}^{ss}_{d, \, 2r, \, m}} = \POL^{ss}_{d, \, 2r, \,  m}.
$$
\end{theorem}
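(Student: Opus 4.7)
\textbf{Proof plan for Theorem \ref{thm.mainskew}.} The plan is to establish the two inclusions of closures separately. The inclusion $\overline{\mathcal{G}^{ss}_{d,2r,m}} \subseteq \POL^{ss}_{d,2r,m}$ is the routine direction: for any $U(\lambda) \in \POL^{m\times r}_{\lfloor d/2\rfloor}$ and $V(\lambda) \in \POL^{m\times r}_{\lceil d/2\rceil}$, the polynomial $P(\lambda) := U(\lambda)V(\lambda)^T - V(\lambda)U(\lambda)^T$ is manifestly skew-symmetric, has grade $\lfloor d/2\rfloor + \lceil d/2\rceil = d$, and admits the factorization
\[
P(\lambda) = \begin{bmatrix} U(\lambda) & V(\lambda) \end{bmatrix} \begin{bmatrix} 0 & I_r \\ -I_r & 0 \end{bmatrix} \begin{bmatrix} U(\lambda) & V(\lambda) \end{bmatrix}^T,
\]
which forces its normal rank to be at most $2r$. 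Hence $\mathcal{G}^{ss}_{d,2r,m} \subseteq \POL^{ss}_{d,2r,m}$, and since $\POL^{ss}_{d,2r,m}$ is closed by Theorem \ref{anydth}, the inclusion passes to closures.

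For the reverse inclusion, the strategy is to exploit irreducibility. The space $\POL^{ss}_{d,2r,m}$ is irreducible, being the closure of the single orbit $\orb(W)$ in Theorem \ref{anydth}. On the other hand, $\mathcal{G}^{ss}_{d,2r,m}$ is the image of the irreducible affine space $\POL^{m\times r}_{\lfloor d/2\rfloor} \times \POL^{m\times r}_{\lceil d/2\rceil}$ under the polynomial map $\Phi(U,V) := UV^T - VU^T$, so $\overline{\mathcal{G}^{ss}_{d,2r,m}}$ is irreducible. Two irreducible varieties with one contained in the other coincide as soon as their dimensions match, so the proof reduces to showing
\[
\dim \overline{\mathcal{G}^{ss}_{d,2r,m}} = \dim \POL^{ss}_{d,2r,m}.
\]
I would compute the right-hand side using the codimension formulas for skew-symmetric orbits from \cite{DmKS13} applied to the explicit $W$ of Theorem \ref{anydth}, whose minimal indices are $t$ copies of $\beta+1$ and $(m-2r-t)$ copies of $\beta$ with no elementary divisors, yielding a closed expression in $m, r, d$. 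For the left-hand side I would use
\[
\dim \overline{\mathrm{Im}(\Phi)} = \dim(\mathrm{domain}) - \dim(\mathrm{generic\ fiber}),
\]
where the domain has dimension $mr(d+2)$.

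The main obstacle is the analysis of the generic fiber of $\Phi$. Any two factorizations $UV^T - VU^T = U'V'^T - V'U'^T$ that respect the grade constraints must be related by $[U'\ V'] = [U\ V] S(\lambda)$ for a grade-preserving polynomial matrix $S(\lambda)$ satisfying $S(\lambda)\bigl(\begin{smallmatrix} 0 & I_r \\ -I_r & 0 \end{smallmatrix}\bigr) S(\lambda)^T = \bigl(\begin{smallmatrix} 0 & I_r \\ -I_r & 0 \end{smallmatrix}\bigr)$. Working through the two parity cases for $d$, this grade-preserving symplectic stabilizer turns out to have dimension $r(2r+1)$ in either case (in the odd case, the seemingly more restrictive block structure is compensated by one linear-in-$\lambda$ off-diagonal block). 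The harder half of this step is to prove that for generic $P$ there are no further factorizations — i.e., that the generic fiber is exactly this stabilizer orbit and nothing larger. This is a rigidity statement that uses the generic eigenstructure (no elementary divisors, minimal indices all $\beta$ or $\beta+1$) and is exactly where the preliminary lemmas from \cite{DmDV23} enter: they provide the analogous rigidity for factorizations $AB^T$ of general matrix polynomials, which I would invoke after separating $S(\lambda)$ into its block components. Once the fiber dimension $r(2r+1)$ is established, matching it against the codimension from \cite{DmKS13} reduces to an algebraic identity in the parameters $\beta$, $t$, $r$, $m$, $d$ obtained from $t(\beta+1) + (m-2r-t)\beta = rd$, and completes the proof.
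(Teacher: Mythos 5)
Your first inclusion is fine and coincides with what the paper does. For the reverse inclusion, however, your route is only a plan whose three load-bearing steps are asserted rather than proved, and each of them hides a genuine difficulty. First, the dimension-matching argument needs $\POL^{ss}_{d,\,2r,\,m}$ to be irreducible, which you justify by saying it is the closure of the single orbit $\orb(W)$ of Theorem~\ref{anydth}; but for grade $d>1$ this ``orbit'' is an eigenstructure equivalence class, not the orbit of a connected group acting on $\POL^{ss}_{d,\,m}$, so its irreducibility is itself a nontrivial fact (in the literature it is obtained through linearizations) that you neither prove nor correctly source. Second, the dimension of $\overline{\orb(W)}$ cannot be read off from \cite{DmKS13}: that reference gives codimensions of congruence orbits of skew-symmetric \emph{pencils}, whereas here you need codimensions of eigenstructure orbits of skew-symmetric matrix \emph{polynomials} of grade $d$, a different framework; and the concluding ``algebraic identity'' matching this codimension against $mr(d+2)-r(2r+1)$ is never actually verified, so the proof is not closed even at the bookkeeping level. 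Third, and most importantly, you yourself flag the rigidity statement — that the generic fiber of $\Phi(U,V)=UV^T-VU^T$ is exactly the stabilizer orbit of dimension $r(2r+1)$ — as the hard half, and you only gesture at ``the preliminary lemmas from \cite{DmDV23}'' without saying which results give the required upper bound on the fiber dimension or how the skew-symmetric pairing is handled. Without that bound the image could be lower-dimensional and the whole argument collapses, so this is precisely the point where a proof is still missing. (Your stabilizer count, including the degree-one off-diagonal block in the odd-$d$ case, does come out to $r(2r+1)$, so that particular claim is sound.)

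For comparison, the paper avoids dimension counting altogether. It first shows (using the minimal-basis factorization results of \cite{DmDV23}, adapted to the skew-symmetric setting) that every polynomial in $\POL^{ss}_{d,\,2r,\,m}$ of exact rank $2r$, exact degree $d$ and with no eigenvalues can be written exactly as $L(\lambda)\left[\begin{smallmatrix}0&\Delta_r\\ -\Delta_r&0\end{smallmatrix}\right]L(\lambda)^T$ with $L$ a minimal basis whose column degrees pair up to $d$ (Theorem~\ref{thm.auxmainskew}); it then uses an explicit $\varepsilon$-perturbation of the columns to push any such degree pattern into the balanced pattern $(\lceil d/2\rceil,\ldots)$ in the closure sense (Theorem~\ref{thm.dsets}); and finally it invokes the density of the no-eigenvalue polynomials, which is the only place Theorem~\ref{anydth} is used. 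That route is constructive and sidesteps both the irreducibility and the fiber-rigidity issues on which your plan currently rests. If you want to pursue your approach, the concrete items to supply are: irreducibility of $\orb(W)$ for grade $d$, a codimension formula valid for skew-symmetric polynomial orbits (not pencil orbits), the fiber upper bound, and the final numerical identity.
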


\subsection{Proof of Theorem \ref{thm.mainskew}} \label{subsec.proofmain}
The proof uses several definitions and results that can be found in \cite[Sections 2 \& 3]{DmDV23}, which are not repeated here for the sake of brevity. We use often the $i$-th column of a matrix polynomial $L(\lambda)$, which is denoted by $L_{*i} (\lambda)$ of simply by $L_{*i}$. In addition to the notations used above for different sets of matrix polynomials, we use in this subsection $\POL^{p\times q}$ to denote the set of $p\times q$ matrix polynomials over $\mathbb{C}$, without specifying the grade. The (normal) rank of a matrix polynomial $L(\lambda)$ is denoted by $\rank (L)$. For $k = 1,2,\ldots$, we define the following $k \times k$ matrix
$$
\Delta_k := \begin{bmatrix}
               &  &  & 1 \\
               &   & 1 &   \\
              & \iddots &  &  \\
             1 &  &  &  
           \end{bmatrix}.
$$
With this definition, it can be shown that the set $\mathcal{G}^{ss}_{d, \, 2r, \, m}$ in Theorem \ref{thm.mainskew} can be expressed as follows
\begin{equation} \label{eq.Gset2}
\mathcal{G}^{ss}_{d, \, 2r, \, m}  =  \left\{ L(\lambda) \left[ \begin{array}{cc}
0 & \Delta_r \\
-\Delta_r & 0
\end{array}
\right]  L(\lambda)^T\, : \,
\begin{array}{l}
L(\lambda) \in \POL^{m\times 2r}, \\ \displaystyle
\deg (L_{*i}) \leq \left\lceil \frac{d}{2}\right\rceil, \quad \mbox{for $i=1,\ldots , r$}, \\[5mm] \displaystyle
\deg (L_{*i}) \leq \left\lfloor \frac{d}{2}\right\rfloor, \quad \mbox{for $i=r + 1,\ldots ,2r$}
\end{array}
\right\}  \, ,
\end{equation}
which is more adequate for the purposes of this subsection. To see that the set in the right-hand side of \eqref{eq.Gset2} is indeed $\mathcal{G}^{ss}_{d, \, 2r, \, m}$, it is enough to express $L(\lambda)$ as $L(\lambda) = \begin{bmatrix}
-V(\lambda)\Delta_r & U(\lambda)
\end{bmatrix}$.

If $P(\lambda) \in \POL^{ss}_{d, m}$, then its rational column space, as defined in \cite[Definition 2.6]{DmDV23}, satisfies ${\mathcal Col} (P) = {\mathcal Col} (P^T)$. Lemma \ref{lemm.pr31-1} is the first auxiliary result we need for proving Theorem \ref{thm.mainskew}. It is a consequence of \cite[Theorems 3.11 and 3.19]{DmDV23} and uses the well-known concept of {\em minimal basis} of a rational subspace \cite{Forn75} (see also \cite[pp. 742-743]{DmDV23}). Following a standard practice, we often say that a matrix polynomial is a minimal basis when its columns are a minimal basis of the rational subspace they span.

\begin{lemma} \label{lemm.pr31-1}
\begin{itemize}
    \item[\rm (i)] If $P(\lambda) \in \POL^{ss}_{d, 2r, m}$ and $\rank (P) = 2r$, then $P(\lambda) = L(\lambda) E(\lambda)  L(\lambda)^T$, where the columns of $L(\lambda) \in \POL^{m \times 2r}$ form a minimal basis of  ${\mathcal Col} (P)$ and $E(\lambda) \in \POL^{2r \times 2r}$ is skew-symmetric with $\rank (E) = 2r$.
    \item[\rm (ii)] If $P(\lambda) \in \POL^{ss}_{d, 2r, m}$, $\rank (P) = 2r$, $\deg(P) = d$, $P(\lambda)$ has not eigenvalues (finite or infinite) and the columns of $L(\lambda)$ in the factorization in item {\rm (i)} are ordered in such a way that $\deg (L_{*1}) \geq \deg (L_{*2}) \geq \cdots \geq \deg (L_{*2r})$, then 
    $$
    \deg (L_{*i}) + \deg (L_{*(2r-i+1)}) = d, \quad \mbox{for $i = 1, \ldots , 2r$}.
    $$
\end{itemize}
\end{lemma}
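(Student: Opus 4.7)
The plan is to derive both items from the general non-symmetric factorization results \cite[Theorems 3.11 and 3.19]{DmDV23}, using skew-symmetry only to identify the two minimal bases that appear in those statements.

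For item (i), I would start by applying \cite[Theorem 3.11]{DmDV23} (or, equivalently, using the predictable-degree property of minimal bases) to express every column of $P(\lambda)$ as a polynomial combination of the columns of a minimal basis $L(\lambda)$ of $\mathcal{C}\mathrm{ol}(P)$. This produces a factorization $P(\lambda) = L(\lambda) F(\lambda)$ with $F \in \POL^{2r \times m}$ of normal rank $2r$. Next, I would apply the same construction to $P^T$: since $P$ is skew-symmetric, $\mathcal{C}\mathrm{ol}(P^T) = \mathcal{C}\mathrm{ol}(P)$, so in particular every column of $F^T$ lies in $\mathcal{C}\mathrm{ol}(P)$. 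The predictable-degree property then yields $F^T = L(\lambda)\, E(\lambda)^T$ for some $E \in \POL^{2r \times 2r}$, giving the single-basis factorization $P = L E L^T$. The skew-symmetry of $P$ combined with the full column rank of $L$ (together with the corresponding fact for $L^T$) forces $L(E + E^T)L^T = 0$, hence $E^T = -E$; the rank claim $\rank E = 2r$ follows from $\rank P = 2r$ and the full column rank of $L$.

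For item (ii), starting from the factorization $P = L E L^T$ provided by (i), I would invoke \cite[Theorem 3.19]{DmDV23} taking both the column and row minimal basis to be the same $L$ from (i). Under the hypotheses that $\deg(P) = d$ and that $P$ has no finite or infinite eigenvalues, that theorem yields, after arranging the columns of the two bases in decreasing-degree order, the pairing of column degrees that sums to $d$. Since the two bases coincide, the pairing simplifies to $\deg(L_{*i}) + \deg(L_{*(2r-i+1)}) = d$ for $i = 1, \ldots, 2r$. A harmless column permutation of $L$ (conjugated into $E$, which remains skew-symmetric) reconciles the ordering assumed in (ii) with the one produced by (i).

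The main obstacle I expect is the step in (i) that upgrades a two-basis factorization $P = L M \widetilde{L}^T$ delivered by \cite[Theorem 3.11]{DmDV23} to a genuine single-basis factorization $L E L^T$ in which $E$ is skew-symmetric. The route above via the predictable-degree property applied to $P^T$ circumvents any explicit change-of-basis between $L$ and $\widetilde{L}$, but one must check carefully that the $E$ so produced really lies in $\POL^{2r \times 2r}$ (i.e., has polynomial, not rational, entries) and has full rank $2r$; both follow from the minimal-basis property of $L$, but deserve explicit verification. Once (i) is in hand in this symmetric form, the degree equality in (ii) is an immediate specialization of the non-symmetric statement to $R = L$.
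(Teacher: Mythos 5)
Your proposal follows essentially the same route as the paper: both items are reduced to Theorems 3.11 and 3.19 of \cite{DmDV23} together with ${\mathcal Col}(P)={\mathcal Col}(P^T)$, the only cosmetic difference being that the paper obtains the skew-symmetry of $E(\lambda)$ from a polynomial left inverse $M(\lambda)$ of the minimal basis $L(\lambda)$ (so that $E = M P M^T$), while you cancel the full-column-rank factors in $L(E+E^T)L^T=0$, which is equally valid. The one step to tighten is in item (ii): Theorem 3.19 of \cite{DmDV23} supplies its own pair of minimal bases rather than letting you ``take both to be $L$'', so, as in the paper, you should invoke the fact that any two minimal bases of ${\mathcal Col}(P)$ have the same ordered list of column degrees, which transfers the degree pairing from that pair of bases to the single basis $L$ of item (i).
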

\begin{proof}
Item (i). Except for the skew-symmetry of $E(\lambda)$, the existence of a factorization $P(\lambda) = L(\lambda) E(\lambda)  L(\lambda)^T$ with the stated properties follows immediately from combining Theorem 3.11-(i) and Remark 3.13 in \cite{DmDV23} with the fact that ${\mathcal Col} (P) = {\mathcal Col} (P^T)$. For proving the skew-symmetry of $E(\lambda)$, we use that since $L(\lambda)$ is a minimal basis, it has a polynomial left inverse $M(\lambda) \in \POL^{2r \times m}$ such that $M(\lambda) L(\lambda) = I_{2r}$ \cite[Theorem 4]{Forn75}. So, $E(\lambda) = M(\lambda) P(\lambda) M(\lambda)^T$ and the skew-symmetry of $E(\lambda)$ follows from that of $P(\lambda)$.

\medskip \noindent
Item (ii). Theorem 3.19 in \cite{DmDV23} implies that $P(\lambda) = L(\lambda) U(\lambda)$, where
$L(\lambda), U(\lambda)^T \in  \POL^{m \times 2r}$ are two minimal bases of ${\mathcal Col} (P)$ and
\begin{equation} \label{eq.aux31-1}
    \deg (L_{*i}) + \deg ((U^T)_{*i}) = d, \quad \mbox{for $i = 1, \ldots , 2r$}.
\end{equation}
Since, $L(\lambda)$ and $U(\lambda)^T$ are minimal bases of the same rational subspace, there exists a reordering of the columns of $U(\lambda)^T$ that makes the degrees of its columns equal to the degrees of the columns of $L(\lambda)$. Moreover, \eqref{eq.aux31-1} guarantees that if the degrees of the columns of $L(\lambda)$ are decreasingly ordered, then those of the columns of $U(\lambda)^T$ are increasingly ordered.
\end{proof}

Motivated by the properties of the matrix polynomials in the orbit $\orb (W)$ in Theorem \ref{anydth}, we introduce the following subset of $\POL^{ss}_{d, \, 2r, \,  m}$, which includes $\orb (W)$.

\begin{definition} \label{def.polynoeig}
$$
\POL^{ss,neig}_{d, \, 2r, \,  m} := \left\{
P(\lambda) \in \POL^{ss}_{d, \, 2r, \,  m} \, : \,
\begin{array}{l}
\rank(P) = 2r, \\
\deg(P) = d, \\
P(\lambda) \; \mbox{has no eigenvalues}
\end{array}
\right\} \, .
$$
\end{definition}
We emphasize that when we say that a matrix polynomial has no eigenvalues, we refer to both finite and infinite eigenvalues.

The next lemma is a corollary of Theorem \ref{anydth} and of $\orb (W) \subset \POL^{ss,neig}_{d, \, 2r, \,  m} \subset \POL^{ss}_{d, \, 2r, \,  m}$. One just need to take closures in this chain of inclusions. Note that Theorem \ref{anydth} states, in particular, that $\POL^{ss}_{d, \, 2r, \,  m}$  is a closed subset of $\POL^{ss}_{d, \, m}$, which, on the other hand, is obvious from the definition of (normal) rank. Lemma \ref{lem.1incl} is important in the proof of Theorem \ref{thm.mainskew}.

\begin{lemma} \label{lem.1incl}
$\overline{\POL^{ss,neig}_{d, \, 2r, \,  m}} = \POL^{ss}_{d, \, 2r, \,  m}$.
\end{lemma}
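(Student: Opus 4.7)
The plan is to follow the hint given in the paragraph preceding the lemma and chase the inclusions in a short chain, taking closures at the end. The skeleton will be
$$
\orb(W) \;\subset\; \POL^{ss,neig}_{d, \, 2r, \,  m} \;\subset\; \POL^{ss}_{d, \, 2r, \,  m},
$$
and the strategy is to show that after closure the outer terms coincide, which forces the middle closure to equal $\POL^{ss}_{d, \, 2r, \,  m}$ as well.

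First I would verify each of the two inclusions above. The second is immediate from Definition \ref{def.polynoeig}, which merely adds constraints on top of membership in $\POL^{ss}_{d, \, 2r, \,  m}$. For the first, I would use the explicit description of $W$ given in Theorem \ref{anydth}: the polynomial $W(\lambda)$ has degree exactly $d$, rank exactly $2r$, and no elementary divisors at all. Since every polynomial in $\orb(W)$ has, by definition of orbit, the same complete eigenstructure (same elementary divisors, same minimal indices), the same rank, and the same degree as $W$, it also has no finite and no infinite eigenvalues, and hence lies in $\POL^{ss,neig}_{d, \, 2r, \,  m}$.

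Next I would take closures in the metric space $\POL^{ss}_{d, \, m}$. The key observation, which the paragraph before the lemma highlights, is that $\POL^{ss}_{d, \, 2r, \,  m}$ is a closed subset: this is stated explicitly in Theorem \ref{anydth}, and is in any case obvious from the definition of normal rank (the rank drops on a Zariski-closed locus, which is closed in the Euclidean topology). Hence $\overline{\POL^{ss}_{d, \, 2r, \,  m}} = \POL^{ss}_{d, \, 2r, \,  m}$. Combining this with the monotonicity of closure applied to the chain above gives
$$
\overline{\orb(W)} \;\subset\; \overline{\POL^{ss,neig}_{d, \, 2r, \,  m}} \;\subset\; \POL^{ss}_{d, \, 2r, \,  m}.
$$
Finally, Theorem \ref{anydth} also identifies the leftmost term: $\overline{\orb(W)} = \POL^{ss}_{d, \, 2r, \,  m}$. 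Consequently the two inclusions are forced to be equalities, yielding the desired identity $\overline{\POL^{ss,neig}_{d, \, 2r, \,  m}} = \POL^{ss}_{d, \, 2r, \,  m}$.

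There is no real obstacle here; the proof is essentially bookkeeping. The only point that deserves a moment of care is checking that every element of $\orb(W)$ genuinely has \emph{no} eigenvalues (finite and infinite) and the right rank and degree — i.e., that the invariants used to define the orbit really do pin down membership in $\POL^{ss,neig}_{d, \, 2r, \,  m}$. Once this is noted, the rest is just taking closures and invoking Theorem \ref{anydth}.
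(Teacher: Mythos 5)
Your proof is correct and is essentially the paper's argument: take closures in the chain $\orb(W) \subset \POL^{ss,neig}_{d,\,2r,\,m} \subset \POL^{ss}_{d,\,2r,\,m}$ and invoke Theorem \ref{anydth} both for $\overline{\orb(W)} = \POL^{ss}_{d,\,2r,\,m}$ and for the closedness of $\POL^{ss}_{d,\,2r,\,m}$. Your extra check that every element of $\orb(W)$ has the right rank, degree, and no eigenvalues is exactly the bookkeeping the paper leaves implicit.
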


Theorem \ref{thm.auxmainskew} describes the set $\POL^{ss,neig}_{d, \, 2r, \,  m}$ in terms of minimal rank factorizations.
\begin{theorem} \label{thm.auxmainskew} 
\[
\POL^{ss,neig}_{d, \, 2r, \,  m} =  \left\{ L(\lambda) \left[ \begin{array}{cc}
0 & \Delta_r \\
-\Delta_r & 0
\end{array}
\right]  L(\lambda)^T\, : \,
\begin{array}{l}
L(\lambda) \in \POL^{m\times 2r}, \\
L(\lambda) \; \mbox{is a minimal basis}, \\ 
\deg(L_{*1}) \geq \cdots \geq \deg(L_{*2r}), \\
\deg(L_{*i}) + \deg(L_{*(2r-i+1)}) = d, \\
\phantom{aa} \mbox{for $i=1,\ldots ,2r$}
\end{array}
\right\}  \, .
\]
\end{theorem}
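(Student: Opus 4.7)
The plan is to establish both inclusions of Theorem \ref{thm.auxmainskew} separately; the ``$\supseteq$'' direction is a direct verification, while ``$\subseteq$'' requires normalizing the skew-symmetric middle factor of a minimal basis factorization of $P$ by a unimodular congruence whose degrees are carefully controlled.

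For ``$\supseteq$'', take a minimal basis $L \in \POL^{m\times 2r}$ with decreasing column degrees $d_1 \ge \cdots \ge d_{2r}$ satisfying $d_i + d_{2r-i+1} = d$, and set $P := L J L^T$ where $J$ is the constant skew-symmetric matrix of the theorem. Skew-symmetry follows from $J^T = -J$, and the normal rank equals $2r$ because $L$ has full column rank over $\mathbb{C}(\lambda)$ and $J$ is nonsingular. The degree-sum condition forces every entry of $P$ to have degree at most $d$, while the coefficient of $\lambda^d$ in $P$ turns out to be $H_L J H_L^T$, where $H_L$ is the highest-column-degree coefficient matrix of $L$; column-reducedness of minimal bases makes $H_L$ of full column rank, so $H_L J H_L^T$ has rank $2r$, giving both $\deg P = d$ and absence of an infinite eigenvalue. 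Finite eigenvalues are ruled out by the standard fact that $L(\lambda_0)$ has full column rank for every $\lambda_0 \in \mathbb{C}$.

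For ``$\subseteq$'', given $P \in \POL^{ss,neig}_{d,2r,m}$, I would apply Lemma \ref{lemm.pr31-1}(i) to write $P = L_0 E L_0^T$ with $L_0$ a minimal basis of $\mathcal{Col}(P)$ and $E$ skew-symmetric of rank $2r$, then permute the columns of $L_0$ by a constant congruence so that $d_1 \ge \cdots \ge d_{2r}$. Lemma \ref{lemm.pr31-1}(ii) supplies $d_i + d_{2r-i+1} = d$. Since $L_0(\lambda_0)$ has full column rank and $P(\lambda_0)$ has rank $2r$ for every $\lambda_0 \in \mathbb{C}$, a rank count via $P = L_0 E L_0^T$ forces $E(\lambda_0)$ to have rank $2r$ everywhere, so $\det E$ is a nonzero constant and $E$ is unimodular. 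Writing $U = E L_0^T$ (the factor that appears in the proof of Lemma \ref{lemm.pr31-1}(ii), whose $i$-th row has degree $d_{2r-i+1}$) and invoking the predictable degree property of the column-reduced $L_0$ yields the entrywise bound $\deg E_{ik} \le d - d_i - d_k$; hence $E_{ik}$ is zero when $d_i + d_k > d$, constant when $d_i + d_k = d$, and polynomial of the stated degree bound when $d_i + d_k < d$, and unimodularity forces the constant ``anti-diagonal'' blocks of $E$ to be invertible.

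The key remaining step is to construct a unimodular $G \in \POL^{2r\times 2r}$ obeying $\deg G_{ij} \le d_j - d_i$ such that $G J G^T = E$. Once this is done, setting $L := L_0 G$ gives $P = L J L^T$, and the predictable degree property applied to $L$, together with preservation of the column span under right-multiplication by a unimodular matrix, ensures that $L$ is a minimal basis of $\mathcal{Col}(P)$ with the same column degrees as $L_0$ in the same decreasing order. Existence of such $G$ is proved by block elimination: within each group of indices of equal $d_i$, a constant congruence first normalizes the corresponding constant anti-diagonal block of $E$ to the shape dictated by $J$; then the polynomial entries of $E$ at positions with $d_i + d_k < d$ are cleared by elementary operations that add polynomial multiples of columns of $L_0$ of smaller degree to columns of larger degree, the degree match $\deg E_{ik} \le d - d_i - d_k$ on one side and $\deg G_{ij} \le d_j - d_i$ on the other being exactly what makes each elimination step compatible with preservation of the column degree structure.

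The main obstacle is this final normalization. It is tempting to apply the skew-symmetric Smith form of Theorem \ref{tsmiths} directly to the unimodular $E$ (all of whose invariant polynomials equal $1$) and obtain a unimodular congruence to a constant canonical form, but an unconstrained congruence will in general destroy the column-degree structure of $L_0$. The real difficulty is thus to exhibit $G$ respecting the bounds $\deg G_{ij} \le d_j - d_i$, and the degree-controlled block elimination sketched above is the technical heart of the argument.
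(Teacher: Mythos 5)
Your outline follows essentially the same architecture as the paper's proof. The inclusion of the factorized set into $\POL^{ss,neig}_{d,\,2r,\,m}$, which you verify by hand (skew-symmetry, rank via nonsingularity of the middle factor, degree and absence of an infinite eigenvalue via the highest-column-degree coefficient matrix $H_L\left[\begin{smallmatrix}0&\Delta_r\\-\Delta_r&0\end{smallmatrix}\right]H_L^T$, absence of finite eigenvalues via full column rank of $L(\lambda_0)$), is simply cited in the paper from \cite[Theorem~3.19]{DmDV23}; your direct verification is correct and more self-contained. For the reverse inclusion you and the paper proceed identically up to a point: factor $P=\widetilde L\,\widetilde E\,\widetilde L^T$ by Lemma~\ref{lemm.pr31-1}, order the column degrees, get the degree sums from item (ii), and obtain the block degree bounds $\deg\widetilde E_{ij}\le d-\widetilde d_i-\widetilde d_j$ (the paper cites \cite[Corollary~3.17-(ii)]{DmDV23}; your route via the predictable degree property works, and in fact you only need $\deg P\le d$ applied twice to $P=\widetilde L\,U$ and $U^T=-\widetilde L\,\widetilde E$, not the exact row degrees of $U=\widetilde E\,\widetilde L^T$ that you assert without proof).

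The genuine difference is the final normalization, and it is precisely the step you leave unproven: the existence of a unimodular $G$ with $\deg G_{ij}\le d_j-d_i$ and $GJG^T=\widetilde E$ is asserted via a ``degree-controlled block elimination'' whose verification (elimination order, preservation of the constant invertible anti-diagonal pivots, of already created zeros and of the degree bounds under each congruence, behaviour of the tie case $\widetilde d_p=d/2=\widetilde d_{p+1}$) is exactly the technical content you acknowledge but do not supply. The paper avoids this induction altogether: a single constant congruence, needed only in the tie case and obtained from the canonical form of a nonsingular constant skew-symmetric matrix \cite[Corollary~2.6.6-(b)]{HoJo2013}, makes $\widetilde E$ block lower anti-triangular, and then an explicit block lower triangular unimodular $X$, whose off-diagonal blocks are blocks of $\widetilde E$ (halved on the ``anti-diagonal'' of the lower part) times $\Delta_{r_j}$ and whose diagonal blocks are constant and invertible, satisfies $\widetilde E=X\Lambda X^T$ by direct multiplication; its block degrees are automatically $\le\widetilde d_j-\widetilde d_i$, so $L:=\widetilde L X$ has $\deg L_{*j}\le\widetilde d_j$, and the minimal-degree-sum property of minimal bases forces equality, giving minimality of $L$ (a cleaner closing argument than your appeal to the predictable degree property plus span preservation). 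So: same route, but the closed-form factor $X$ replaces—and in effect proves—the elimination you flag as the main obstacle; if you keep your version, that elimination must actually be written out, with the bookkeeping $d-d_i-d_k=d_{2r+1-i}-d_k$ you correctly identify.
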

\begin{proof}
Let $\mathcal{S}$ be the set in the right-hand side of the equality in Theorem \ref{thm.auxmainskew}. The inclusion $\mathcal{S}\subseteq \POL^{ss,neig}_{d, \, 2r, \,  m}$ is a direct consequence of \cite[Theorem 3.19]{DmDV23}. Therefore, we focus on proving that if $P(\lambda) \in \POL^{ss,neig}_{d, \, 2r, \,  m}$, then $P(\lambda) \in \mathcal{S}$. By Lemma \ref{lemm.pr31-1}, any $P(\lambda) \in \POL^{ss,neig}_{d, \, 2r, \,  m}$ can be factorized as 
\begin{equation} \label{eq.tildefact}
P(\lambda) = \widetilde L(\lambda) \widetilde E(\lambda)  \widetilde L(\lambda)^T,
\end{equation}
where the columns of $\widetilde L(\lambda) \in \POL^{m \times 2r}$ are a minimal basis of  ${\mathcal Col} (P)$ and $\widetilde E(\lambda) \in \POL^{2r \times 2r}$ is skew-symmetric with $\rank (\widetilde E) = 2r$. Moreover, $\deg (\widetilde L_{*1}) \geq \deg (\widetilde L_{*2}) \geq \cdots \geq \deg (\widetilde L_{*2r})$, and
\begin{equation} \label{eq.reldeg1}
\deg (\widetilde L_{*i}) + \deg (\widetilde L_{*(2r-i+1)}) = d, \quad \mbox{for $i = 1, \ldots , 2r$}.
\end{equation}
Let us assume that the first $r$ columns of $\widetilde L(\lambda)$ have $p$ different degrees $\widetilde{d}_1 > \cdots > \widetilde{d}_p$ and that there are $r_i$ columns with degree $\widetilde{d}_i$ for $i=1, \ldots , p$. Note that $r = r_1 + \cdots + r_p$. From \eqref{eq.reldeg1}, we get that the last $r$ columns of $\widetilde L(\lambda)$ also have $p$ different degrees $\widetilde{d}_{p+1} > \cdots > \widetilde{d}_{2p}$ and that there are $r_i$ columns among the last $r$ columns with degree 
\begin{equation} \label{eq.reldeg2}
\widetilde{d}_{2p-i+1} = d - \widetilde{d}_{i}, \quad \mbox{for $i=1, \ldots , p$},
\end{equation}
which are in the ``mirror'' positions to the corresponding ones in the first $r$ columns.
These considerations induce the following partition of  $\widetilde L(\lambda)$
\begin{equation} \label{eq.partitiontildeL}
\widetilde L(\lambda) = \begin{bmatrix}
\widetilde L_1 & \cdots & \widetilde L_p & \widetilde L_{p+1}  & \cdots & \widetilde L_{2p}
\end{bmatrix},
\end{equation}
where we omit the dependence in $\lambda$ in the blocks $\widetilde L_i$,
\begin{equation} \label{eq.propstildeL}
\begin{array}{l}
\widetilde L_i \in \POL^{m \times r_i}, \; \mbox{for $i=1, \ldots , p$},\\[2mm]
\widetilde L_i \in \POL^{m \times r_{2p-i+1}}, \; \mbox{for $i=p+1, \ldots , 2p$},\\[2mm]
\deg (\widetilde L_i) = \widetilde{d}_i, \; \mbox{for $i=1, \ldots , 2p$}, \\[2mm]
\widetilde{d}_1 > \cdots > \widetilde{d}_p \geq \widetilde{d}_{p+1} > \cdots > \widetilde{d}_{2p},
\end{array} 
\end{equation}
and all the columns in the same block $\widetilde L_i$ have the same degree $\widetilde{d}_i$.
Note that $\widetilde{d}_p \geq d/2$ and $\widetilde{d}_{p+1} \leq d/2$, because $\widetilde{d}_p \geq \widetilde{d}_{p+1}$ and $\widetilde{d}_p + \widetilde{d}_{p+1} = d$ by \eqref{eq.reldeg2}. So,
\begin{equation} \label{eq.reldeg3}
\widetilde{d}_1 > \cdots > \widetilde{d}_p \geq d/2 \geq \widetilde{d}_{p+1} > \cdots > \widetilde{d}_{2p}.
\end{equation}
The partition of $\widetilde L(\lambda)$ induces a conformable partition in the skew-symmetric matrix polynomial $\widetilde E (\lambda)$. By \cite[Corollary 3.17-(ii)]{DmDV23} such a partition satisfies the following
\begin{equation} \label{eq.reldeg4}
\widetilde E (\lambda) = \left( \widetilde{E}_{ij} \right)_{1\leq i,j \leq 2p} \; \mbox{with} \quad
\widetilde{E}_{ij} = - \widetilde{E}_{ji}^T , \quad \deg(\widetilde{E}_{ij}) \leq d - \widetilde{d}_i  - \widetilde{d}_j, 
\end{equation}
and where we omit the dependence in $\lambda$ in the blocks $\widetilde E_{ij}$. 

The next step in the proof is to show that $\widetilde E (\lambda)$ can be chosen to be lower block anti-triangular, i.e., $\widetilde{E}_{ij}=0$ for $i+j \leq 2p$. For that purpose, we consider two cases. First, if $\widetilde{d}_p >\widetilde{d}_{p+1}$, then \eqref{eq.reldeg3} becomes $\widetilde{d}_1 > \cdots > \widetilde{d}_p > d/2 > \widetilde{d}_{p+1} > \cdots > \widetilde{d}_{2p}$, which, combined with the degree bound in \eqref{eq.reldeg4} and \eqref{eq.reldeg2}, implies that $\widetilde E (\lambda)$ is indeed lower block anti-triangular.  
Second, if $\widetilde{d}_p =d/2 =\widetilde{d}_{p+1}$, then the degree bound in \eqref{eq.reldeg4} and \eqref{eq.reldeg2} only guarantee $\widetilde{E}_{ij}=0$ for $i+j \leq 2p$ and $(i,j) \ne (p,p)$. In this case, the skew-symmetric submatrix $\left( \widetilde{E}_{ij} \right)_{p\leq i,j \leq p+1}$ containing the only potential non-zero block above the main block anti-diagonal must be invertible, since $\rank(\widetilde{E}) = 2r$, and must have degree zero by the degree bound in \eqref{eq.reldeg4} and $\widetilde{d}_p =d/2 =\widetilde{d}_{p+1}$. That is, it is a constant invertible matrix. Therefore by \cite[Corollary 2.6.6-(b)]{HoJo2013}, there exists an invertible constant complex matrix $Q$ such that
$$
Q \begin{bmatrix}
\widetilde E_{p,p} & \widetilde  E_{p, p+1} \\
\widetilde  E_{p+1 , p} & \widetilde  E_{p+1 , p+1}
\end{bmatrix} Q^T = \begin{bmatrix}
0 & H \\
-H^T & 0
\end{bmatrix},
$$
with $H \in \mathbb{C}^{r_p \times r_p}$ invertible. Define $\widetilde{Q} = \diag (I_\ell,Q,I_\ell)$, where $\ell = r-r_p$, and use \eqref{eq.tildefact} to express $P(\lambda)$ as
\[
P(\lambda) = (\widetilde L(\lambda) \widetilde{Q}^{-1}) \, (\widetilde{Q} \widetilde E(\lambda)  \widetilde{Q}^T) \, (\widetilde L(\lambda) \widetilde{Q}^{-1})^T.
\]
Observe that $(\widetilde{Q} \widetilde E(\lambda)  \widetilde{Q}^T)$ is now skew-symmetric, lower block anti-triangular, and the properties of its blocks are still those stated in \eqref{eq.reldeg4}. On the other hand, the polynomial matrix $(\widetilde L(\lambda) \widetilde{Q}^{-1})$ is a minimal basis of ${\mathcal Col} (P)$, because has full column rank $2r$ and the degrees of their columns are the same as those of $\widetilde L(\lambda)$, and admits a partition as the one in \eqref{eq.partitiontildeL} with the properties stated in \eqref{eq.propstildeL}. To avoid additional notation, we actualize the definitions of $\widetilde L(\lambda) \leftmapsto (\widetilde L(\lambda) \widetilde{Q}^{-1})$ and of $\widetilde E(\lambda) \leftmapsto (\widetilde{Q} \widetilde E(\lambda)  \widetilde{Q}^T)$. That is, in the rest of the proof, we work with a factorization of $P(\lambda)$ as in \eqref{eq.tildefact} assuming that $\widetilde E (\lambda)$ is block lower anti-triangular. Observe that the blocks $\widetilde{E}_{i,2p-i+1}$, $i=1,\ldots, 2p$, on the main block anti-diagonal must be constant matrices, due to \eqref{eq.reldeg2} and \eqref{eq.reldeg4}, and invertible, because $\rank (\widetilde{E}) = 2r$.

The lower block anti-triangularity and the skew-symmetry of $\widetilde{E} (\lambda)$ allow us to establish, through a direct multiplication, the following factorization 
\begin{equation} \label{eq.factEtil1}
\widetilde{E} (\lambda) = X(\lambda) \, \Lambda \, X(\lambda)^T,
\end{equation}
where
\[
X(\lambda) :=
\begin{bmatrix}
\widetilde{E}_{1,2p}  \Delta_{r_1}  & & & & & & &  \\
\widetilde{E}_{2,2p}   \Delta_{r_1}  & \widetilde{E}_{2,2p-1} \Delta_{r_2} & & & & & &  \\
\widetilde{E}_{3,2p}   \Delta_{r_1} & \widetilde{E}_{3,2p-1} \Delta_{r_2} & \ddots & & & & &  \\
\vdots   &\vdots & & \widetilde{E}_{p,p+1} \Delta_{r_p} & & & &  \\
\vdots   & \vdots & & \frac{\displaystyle \widetilde{E}_{p+1,p+1}}{2} \Delta_{r_p} & I_{r_p} &  & &  \\
\widetilde{E}_{2p-2,2p}  \Delta_{r_1}& \widetilde{E}_{2p-2,2p-1} \Delta_{r_2} & \reflectbox{$\ddots$} &  & & \ddots & &  \\
\widetilde{E}_{2p-1,2p}  \Delta_{r_1}  & \frac{\displaystyle \widetilde{E}_{2p-1,2p-1}}{2} \Delta_{r_2} & & & & & I_{r_2} &  \\
\frac{\displaystyle \widetilde{E}_{2p,2p}}{2}  \Delta_{r_1}  & & & & & & & I_{r_1}
\end{bmatrix}
\]
and
\[
\Lambda := \begin{bmatrix}
 & & & & & \Delta_{r_1} \\
 & & & & \iddots & \\
 & & &\Delta_{r_p} & & \\
 & & -\Delta_{r_p} & & & \\
 & \iddots & & & & \\
-\Delta_{r_1} & & & & & 
\end{bmatrix} = \left[ \begin{array}{cc}
0 & \Delta_r \\
-\Delta_r & 0
\end{array}
\right]  \in \mathbb{C}^{2r \times 2r} .
\]
Note that the diagonal blocks of $X(\lambda)$ are constant invertible matrices and, so, the matrix polynomial $X(\lambda)$ is invertible (in fact, it is unimodular).
Combining the factorizations \eqref{eq.tildefact} and \eqref{eq.factEtil1}, we get
\begin{equation} \label{eq.deffactor}
 P(\lambda) = (\widetilde{L} (\lambda)  X(\lambda)) \,  \left[ \begin{array}{cc}
0 & \Delta_r \\
-\Delta_r & 0
\end{array}
\right] \, (\widetilde{L}(\lambda)  X(\lambda))^T,
\end{equation}
which imply that the columns of $L(\lambda) := \widetilde{L} (\lambda)  X(\lambda)$ form a polynomial basis of ${\mathcal Col} (P)$.
Taking into account \eqref{eq.reldeg2}, \eqref{eq.partitiontildeL}, \eqref{eq.propstildeL}, and \eqref{eq.reldeg4}, we can partition $L(\lambda)$ as
\begin{equation} \label{eq.partitionLfin}
L(\lambda) = \begin{bmatrix}
L_1 & \cdots & L_p & L_{p+1}  & \cdots &  L_{2p}
\end{bmatrix},
\end{equation}
where $L_j = \widetilde{L}_j$ for $j = p+1, \ldots , 2p$, and $\deg (L_j) \leq  \deg (\widetilde{L}_j) = \widetilde{d}_j$ for $j = 1, \ldots , p$. But if some column of some block $L_j$ had degree strictly smaller than $\widetilde{d}_j$, we would have a polynomial basis of ${\mathcal Col} (P)$ with the sum of all the degrees of its vectors strictly smaller than the sum of all the degrees of the column vectors of the minimal basis $\widetilde{L} (\lambda)$ of ${\mathcal Col} (P)$, which is impossible. So, $L(\lambda)$ is a minimal basis with the degrees of its columns exactly equal to those of $\widetilde L(\lambda)$, which concludes the proof.
\end{proof}

The factorizations in Theorem \ref{thm.auxmainskew} satisfy 
$$\deg(L_{*1}) \geq \cdots \geq \deg(L_{*r}) \geq d/2 \geq \deg(L_{*(r+1)}) \geq \cdots \geq
\deg(L_{*2r}),$$
because otherwise $\deg(L_{*r}) + \deg(L_{*(r+1)}) = d$ would not hold. This motivates the following definition of certain subsets of $\POL^{ss}_{d, \, 2r, \,  m}$.

\begin{definition} \label{def.dsets} Let $m, r$ and $d$ be integers such that $d\geq 1$ and $2 \leq 2r \leq (m-1)$. Let $d_1, d_2, \ldots, d_r$ be integers such that $d \geq d_i \geq d/2$ for $i=1, \ldots, r$. Then
$$
\mathcal{A} (d_1, \ldots , d_r) := 
\left\{ L(\lambda) \left[ \begin{array}{cc}
0 & \Delta_r \\
-\Delta_r & 0
\end{array}
\right]  L(\lambda)^T\, : \,
\begin{array}{l}
L(\lambda) \in \POL^{m\times 2r}, \\
\deg(L_{*i}) = d_i,  \\
\deg(L_{*(2r-i+1)}) = d - d_i, \\
\phantom{aa} \mbox{for $i=1,\ldots ,r$}
\end{array}
\right\}  \, .
$$   
\end{definition}
If Definition \ref{def.dsets} is compared with Theorem \ref{thm.auxmainskew}, we see that in the matrix polynomials in $\mathcal{A} (d_1, \ldots , d_r)$, the factor $L(\lambda)$ is not required to be a minimal basis and that the columns of $L(\lambda)$ are not necessarily arranged in decreasing degree order. On the other hand, it is obvious that $\mathcal{A} (d_1, \ldots , d_r) \subset \POL^{ss}_{d, \, 2r, \,  m}$ and that any polynomial in $\POL^{ss,neig}_{d, \, 2r, \,  m}$ belongs to one of these sets  $\mathcal{A} (d_1, \ldots , d_r)$.

The key property of the sets in Definition \ref{def.dsets} in the context of this paper is stated in the next theorem. We observe that if in the description of the set $\mathcal{G}^{ss}_{d, \, 2r, \, m}$ in \eqref{eq.Gset2} the degree inequalities were replaced by strict equalities, then we would obtain the set $\mathcal{A} \left( \left\lceil \frac{d}{2} \right\rceil, \ldots , \left\lceil \frac{d}{2} \right\rceil \right)$ that appears in Theorem \ref{thm.dsets}.

\begin{theorem} \label{thm.dsets} If $d_1, d_2, \ldots, d_r$ are any integers such that $d \geq d_i \geq d/2$ for $i=1, \ldots, r$, then
$$
\mathcal{A} (d_1, \ldots , d_r) \subseteq \overline{\mathcal{A} \left( \left\lceil \frac{d}{2} \right\rceil, \ldots , \left\lceil \frac{d}{2} \right\rceil \right)} .
$$
\end{theorem}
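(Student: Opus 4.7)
The plan is to reduce the tuple $(d_1,\ldots,d_r)$ toward $(h,\ldots,h)$, where $h=\lceil d/2\rceil$, one coordinate at a time. First I would observe that $LJL^T$ decomposes over column pairs: writing $L=[l_1\mid\cdots\mid l_{2r}]$, a direct calculation yields
\begin{equation*}
L(\lambda)\begin{bmatrix}0 & \Delta_r\\ -\Delta_r & 0\end{bmatrix}L(\lambda)^T \;=\; \sum_{j=1}^r \bigl(l_j\, l_{2r-j+1}^T - l_{2r-j+1}\, l_j^T\bigr),
\end{equation*}
so perturbing a single pair $(l_j,l_{2r-j+1})$ only changes the corresponding summand. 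This decouples the $r$ coordinates and reduces the theorem to a one-pair approximation.

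The crucial pair-wise lemma is: if $l,\tilde l$ are $m$-vector polynomials with $\deg l=s$ and $\deg \tilde l=d-s$ and $s>d/2$, then $l\tilde l^T-\tilde l l^T$ lies in the closure of $\{l'\tilde l'^T-\tilde l'l'^T:\deg l'=s-1,\ \deg \tilde l'=d-s+1\}$. The construction I propose is explicit: for any $\kappa\in\mathbb C\setminus\{0\}$, set
\begin{equation*}
l_\varepsilon(\lambda):=\kappa\,\tilde l(\lambda)+\varepsilon\,\frac{l(\lambda)-l(0)}{\lambda},\qquad
\tilde l_\varepsilon(\lambda):=-\kappa^{-1}\,l(0)+\varepsilon^{-1}\lambda\,\tilde l(\lambda).
\end{equation*}
Both are honest polynomials because $l-l(0)$ is divisible by $\lambda$, and the hypothesis $s>d/2$ forces $\deg l_\varepsilon=s-1$ and $\deg \tilde l_\varepsilon=d-s+1$. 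Writing $q:=(l-l(0))/\lambda$ so that $l=l(0)+\lambda q$, an expansion in which the two $\varepsilon^{-1}\kappa\lambda\,\tilde l\tilde l^T$-contributions cancel in the skew-symmetric difference gives
\begin{equation*}
l_\varepsilon\tilde l_\varepsilon^T-\tilde l_\varepsilon l_\varepsilon^T \;=\; l\tilde l^T-\tilde l l^T \;+\; \frac{\varepsilon}{\kappa}\,\bigl(l(0)\,q^T-q\,l(0)^T\bigr),
\end{equation*}
and the error is a matrix polynomial in $\lambda$ of order $O(\varepsilon)$ that vanishes as $\varepsilon\to 0$.

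Combining these ingredients: applying the lemma to the $j$-th pair while holding the remaining pairs fixed proves $\mathcal{A}(d_1,\ldots,d_j,\ldots,d_r)\subseteq\overline{\mathcal{A}(d_1,\ldots,d_j-1,\ldots,d_r)}$ whenever $d_j>h$, and the new tuple still satisfies the hypotheses of Definition~\ref{def.dsets} since $d_j-1\geq h\geq d/2$. Iterating coordinate by coordinate and invoking transitivity of closure-containment (from $X\subseteq\overline Y$ and $Y\subseteq\overline Z$ one infers $X\subseteq\overline Z$), I would descend in finitely many steps from $(d_1,\ldots,d_r)$ to $(h,\ldots,h)$, producing the claimed inclusion. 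The substantive part of the argument is discovering the one-parameter family above; once that is in hand, the sum-over-pairs decomposition and a straightforward induction finish the proof.
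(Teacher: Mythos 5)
Your proof is correct, and its skeleton is the same as the paper's: decompose the product into the $r$ rank-two skew summands $L_{*j}L_{*(2r-j+1)}^T-L_{*(2r-j+1)}L_{*j}^T$, lower a degree $d_j>\lceil d/2\rceil$ by one through an $\varepsilon$-family supported on that single pair, and iterate using transitivity of closure containments. Where you genuinely differ is in the one-pair move. The paper perturbs only the low-degree column, replacing $L_{*j'}$ by $L_{*j'}+\varepsilon\lambda^{d_{j'}+1}v_j$ with $v_j$ the leading coefficient of $L_{*j}$, and then exhibits the reduced-degree factorization via a $2\times 2$ unimodular congruence that preserves the skew form and cancels the leading term of $L_{*j}$; you instead write down an explicit new pair $(l_\varepsilon,\tilde l_\varepsilon)$ built from $\tilde l$, $q=(l-l(0))/\lambda$, $l(0)$ and $\lambda\tilde l$ with scalings $\varepsilon,\varepsilon^{-1},\kappa$, and verify directly that its skew product equals $P_j+\frac{\varepsilon}{\kappa}\bigl(l(0)q^T-q\,l(0)^T\bigr)$ (I checked the expansion; the $\varepsilon^{-1}$ terms indeed cancel). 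Both yield $\mathcal{A}(\ldots,d_j,\ldots)\subseteq\overline{\mathcal{A}(\ldots,d_j-1,\ldots)}$; the paper's version makes transparent that the approximant is the original pencil plus a small skew perturbation and reuses the original columns, while yours trades the column operation for a self-contained algebraic identity. One small quibble: as stated, your pairwise lemma with hypothesis $s>d/2$ does not quite force $\deg l_\varepsilon=s-1$ in the boundary case $d$ odd, $s=\lceil d/2\rceil$, since then $s-1=d-s$ and the degree-$(s-1)$ coefficient $\kappa(\mathrm{lead}\,\tilde l)+\varepsilon(\mathrm{lead}\,q)$ could cancel for unlucky parameters; this is harmless because your induction only invokes the lemma when $d_j>\lceil d/2\rceil$, where $s-1>d-s$ and the degree claims are automatic, but the lemma should be stated with that hypothesis (or with a generic choice of $\kappa$).
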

\begin{proof}
Let us assume that there exists one integer $d_j > \left\lceil \frac{d}{2} \right\rceil$. Otherwise, there is nothing to prove. Along the proof, for any index $i = 1, \ldots , r$, we define $i' := 2r - i +1$ and $d_{i'} := d -d_i$. Let $P (\lambda) \in \mathcal{A} (d_1, \ldots , d_r)$. Then,
$$
P(\lambda) = \sum_{i=1}^r P_i (\lambda), \quad \mbox{with} \; \; P_i (\lambda) := \begin{bmatrix}
L_{*i} & L_{*i'}
\end{bmatrix}
\begin{bmatrix}
0 & 1 \\-1 & 0
\end{bmatrix}
\begin{bmatrix}
L_{*i}^T \\ L_{*i'}^T
\end{bmatrix},
$$
where we omit the dependence in $\lambda$ in the columns of $L(\lambda)$. Let us focus on the summand $P_j (\lambda)$ corresponding to $d_j > \left\lceil \frac{d}{2} \right\rceil$, which implies $d_{j'} < \left\lfloor \frac{d}{2} \right\rfloor$ and $d_j - d_{j'} \geq 2$. Then
\[
L_{*j}  = v_j \lambda^{d_j}  + v(\lambda) \quad \mbox{and} \quad
L_{*j'}  = w_{j'} \lambda^{d_{j'}} + w(\lambda),
\]
where $v_j, w_{j'} \in \mathbb{C}^m$ are nonzero constant vectors, and $v(\lambda)$ and $w(\lambda)$ are polynomial vectors such that $\deg(v(\lambda)) < d_j$ and $\deg (w(\lambda)) < d_{j'}$. For $\varepsilon >0$ define
$$
L_{*j'}^{(\varepsilon)}  := \varepsilon \lambda^{d_{j'}+1}  v_j + w_{j'} \lambda^{d_{j'}} + w(\lambda) \;\; \mbox{and} \;\;
P_j^{(\varepsilon)} (\lambda) := \begin{bmatrix}
L_{*j} & L_{*j'}^{(\varepsilon)}
\end{bmatrix}
\begin{bmatrix}
0 & 1 \\-1 & 0
\end{bmatrix}
\begin{bmatrix}
L_{*j}^T \\ (L_{*j'}^{(\varepsilon)})^T
\end{bmatrix}.
$$
Observe that $\lim_{\varepsilon \rightarrow 0} P_j^{(\varepsilon)} (\lambda) = P_j (\lambda)$.
Since
$$
\begin{bmatrix}
0 & 1 \\-1 & 0
\end{bmatrix} = 
\begin{bmatrix}
1 & 0 \\-\frac{\lambda^{d_j - d_{j'} -1}}{\varepsilon}  & 1
\end{bmatrix}
\begin{bmatrix}
0 & 1 \\-1 & 0
\end{bmatrix}
\begin{bmatrix}
1 & 0 \\-\frac{\lambda^{d_j - d_{j'}-1}}{\varepsilon}  & 1
\end{bmatrix}^T \, ,
$$
we have that
$$
P_j^{(\varepsilon)} (\lambda) := \left(\begin{bmatrix}
L_{*j} & L_{*j'}^{(\varepsilon)}
\end{bmatrix}
\begin{bmatrix}
1 & 0 \\-\frac{\lambda^{d_j - d_{j'} -1}}{\varepsilon}  & 1
\end{bmatrix} \right)
\begin{bmatrix}
0 & 1 \\-1 & 0
\end{bmatrix}
\left(\begin{bmatrix}
L_{*j} & L_{*j'}^{(\varepsilon)}
\end{bmatrix}
\begin{bmatrix}
1 & 0 \\-\frac{ \lambda^{d_j - d_{j'} -1}}{\varepsilon} & 1
\end{bmatrix} \right)^T.
$$
Note that the columns of
$$
\begin{bmatrix}
L_{*j} & L_{*j'}^{(\varepsilon)}
\end{bmatrix}
\begin{bmatrix}
1 & 0 \\-\frac{\lambda^{d_j - d_{j'} -1}}{\varepsilon}  & 1
\end{bmatrix}  =  \begin{bmatrix}
v(\lambda)-  w_{j'} \frac{\lambda^{d_{j}-1}}{\varepsilon} - w(\lambda) \frac{ \lambda^{d_j - d_{j'} -1}}{\varepsilon} & L_{*j'}^{(\varepsilon)}
\end{bmatrix}
$$
have degrees $d_{j}-1$ and $d_{j'}+1$, respectively, if $\varepsilon$ is sufficiently small. Then
$$
P^{(\varepsilon)}(\lambda) := P_j^{(\varepsilon)} (\lambda) + \sum_{i=1, i \ne j}^r P_i (\lambda) \in \mathcal{A} (d_1, \ldots, d_{j-1}, d_{j}-1 , d_{j+1}, \ldots , d_r)
$$
for all $\varepsilon$ sufficiently small. This, together with $\lim_{\varepsilon \rightarrow 0} P^{(\varepsilon)} (\lambda) = P(\lambda)$, implies $P (\lambda) \in \overline{\mathcal{A} (d_1, \ldots, d_{j-1}, d_{j}-1 , d_{j+1}, \ldots , d_r)}$ and 
$$
\mathcal{A} (d_1, \ldots, d_{j} , \ldots , d_r) \subseteq \overline{\mathcal{A} (d_1, \ldots, d_{j-1}, d_{j}-1 , d_{j+1}, \ldots , d_r)}.
$$
If $d_1= \cdots = d_{j-1} = d_{j}-1 = d_{j+1} = \cdots = d_r = \left\lceil \frac{d}{2} \right\rceil$, we are done. If not, we repeat the process above with the set $\mathcal{A} (d_1, \ldots, d_{j-1}, d_{j}-1 , d_{j+1}, \ldots , d_r)$.
\end{proof}

We are now in the position to prove Theorem \ref{thm.finalproof}, which includes the result in Theorem \ref{thm.mainskew}.
\begin{theorem} \label{thm.finalproof}
$$
 \overline{\mathcal{A} \left( \left\lceil \frac{d}{2} \right\rceil, \ldots , \left\lceil \frac{d}{2} \right\rceil \right)}  =  \overline{\mathcal{G}^{ss}_{d, \, 2r, \, m}} =  \POL^{ss}_{d, \, 2r, \,  m}.
$$
\end{theorem}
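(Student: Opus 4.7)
The plan is to close the circle of inclusions
\[
\mathcal{A}\left( \lceil d/2 \rceil, \ldots, \lceil d/2 \rceil \right) \;\subseteq\; \mathcal{G}^{ss}_{d,\,2r,\,m} \;\subseteq\; \POL^{ss}_{d,\,2r,\,m} \;\subseteq\; \overline{\mathcal{A}\left( \lceil d/2 \rceil, \ldots, \lceil d/2 \rceil \right)} \, ,
\]
and then pass to closures, using that $\POL^{ss}_{d,\,2r,\,m}$ is itself closed (by Theorem~\ref{anydth}). The first and second inclusions are essentially bookkeeping, while the third packages together Lemma~\ref{lem.1incl}, Theorem~\ref{thm.auxmainskew}, and Theorem~\ref{thm.dsets}.

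For the first inclusion, I would rely on the alternative description \eqref{eq.Gset2} of $\mathcal{G}^{ss}_{d,\,2r,\,m}$. Comparing with Definition~\ref{def.dsets}, the only difference is that \eqref{eq.Gset2} requires degree \emph{inequalities} $\deg(L_{*i}) \le \lceil d/2 \rceil$ for $i \le r$ and $\deg(L_{*i}) \le \lfloor d/2 \rfloor$ for $i>r$, while $\mathcal{A}\left( \lceil d/2 \rceil, \ldots, \lceil d/2 \rceil \right)$ imposes the corresponding equalities (noting that $d - \lceil d/2 \rceil = \lfloor d/2 \rfloor$). Hence every element of $\mathcal{A}\left( \lceil d/2 \rceil, \ldots, \lceil d/2 \rceil \right)$ lies in $\mathcal{G}^{ss}_{d,\,2r,\,m}$. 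The second inclusion $\mathcal{G}^{ss}_{d,\,2r,\,m} \subseteq \POL^{ss}_{d,\,2r,\,m}$ is immediate from \eqref{eq.Gset2}, since any polynomial of the form $L(\lambda)\Lambda L(\lambda)^T$ with $L(\lambda)\in\POL^{m\times 2r}$ and $\Lambda$ a constant skew matrix of size $2r$ is skew-symmetric of normal rank at most $2r$, and the grade condition follows from the column-degree bounds.

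The crucial inclusion is the third one. Start with an arbitrary $P(\lambda) \in \POL^{ss,neig}_{d,\,2r,\,m}$. By Theorem~\ref{thm.auxmainskew}, $P(\lambda)$ admits a factorization $L(\lambda)\Lambda L(\lambda)^T$ with $\Lambda = \left[\begin{smallmatrix} 0 & \Delta_r \\ -\Delta_r & 0 \end{smallmatrix}\right]$, decreasing column degrees $\deg(L_{*1})\geq \cdots \geq \deg(L_{*2r})$, and mirror-pairing $\deg(L_{*i})+\deg(L_{*(2r-i+1)})=d$. In particular, setting $d_i := \deg(L_{*i})$ for $i=1,\ldots,r$, one has $d_i \ge d/2$, so $P(\lambda) \in \mathcal{A}(d_1,\ldots,d_r)$. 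Theorem~\ref{thm.dsets} then yields
\[
P(\lambda) \in \mathcal{A}(d_1,\ldots,d_r) \subseteq \overline{\mathcal{A}\left(\lceil d/2 \rceil, \ldots, \lceil d/2 \rceil\right)} \, .
\]
This shows $\POL^{ss,neig}_{d,\,2r,\,m} \subseteq \overline{\mathcal{A}\left(\lceil d/2 \rceil, \ldots, \lceil d/2 \rceil\right)}$, and taking closures while invoking Lemma~\ref{lem.1incl} gives
\[
\POL^{ss}_{d,\,2r,\,m} \;=\; \overline{\POL^{ss,neig}_{d,\,2r,\,m}} \;\subseteq\; \overline{\mathcal{A}\left(\lceil d/2 \rceil, \ldots, \lceil d/2 \rceil\right)} \, .
\]

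Combining the three inclusions and taking closures throughout collapses the chain into the claimed equalities, since $\POL^{ss}_{d,\,2r,\,m}$ is closed. I do not anticipate a real obstacle at this stage: all the substantive technical work—reordering columns into mirror pairs, reducing $\widetilde{E}(\lambda)$ to lower block anti-triangular form, and the one-degree-at-a-time perturbation—has already been absorbed into Theorem~\ref{thm.auxmainskew} and Theorem~\ref{thm.dsets}. The only point requiring care is verifying that the inclusion in Theorem~\ref{thm.dsets} really applies with the specific degrees produced by Theorem~\ref{thm.auxmainskew}, which it does because $d_i \ge d/2$ is exactly the hypothesis of Theorem~\ref{thm.dsets}.
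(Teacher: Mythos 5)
Your proposal is correct and follows essentially the same route as the paper: the chain $\mathcal{A}\left(\left\lceil \frac{d}{2}\right\rceil,\ldots,\left\lceil \frac{d}{2}\right\rceil\right) \subseteq \mathcal{G}^{ss}_{d,\,2r,\,m} \subseteq \POL^{ss}_{d,\,2r,\,m}$ via \eqref{eq.Gset2} and Definition~\ref{def.dsets}, combined with $\POL^{ss,neig}_{d,\,2r,\,m} \subseteq \overline{\mathcal{A}\left(\left\lceil \frac{d}{2}\right\rceil,\ldots,\left\lceil \frac{d}{2}\right\rceil\right)}$ from Theorems~\ref{thm.auxmainskew} and \ref{thm.dsets}, Lemma~\ref{lem.1incl}, closedness of $\POL^{ss}_{d,\,2r,\,m}$, and taking closures. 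The only difference is cosmetic ordering of the inclusions, so no further comment is needed.
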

\begin{proof}
Note first that, since $\POL^{ss}_{d, \, 2r, \,  m}$ is a closed subset of $\POL^{ss}_{d, \,   m}$ and $\mathcal{A} \left( \left\lceil \frac{d}{2} \right\rceil, \ldots , \left\lceil \frac{d}{2} \right\rceil \right) \subseteq \POL^{ss}_{d, \, 2r, \,  m}$, we have that $\overline{\mathcal{A} \left( \left\lceil \frac{d}{2} \right\rceil, \ldots , \left\lceil \frac{d}{2} \right\rceil \right)} \subseteq \POL^{ss}_{d, \, 2r, \,  m}$. On the other hand, if 
$P(\lambda) \in \POL^{ss,neig}_{d, \, 2r, \,  m}$, then Theorem \ref{thm.auxmainskew} implies that
$$P(\lambda) = L(\lambda) \left[ \begin{array}{cc}
0 & \Delta_r \\
-\Delta_r & 0
\end{array}
\right]  L(\lambda)^T \in \mathcal{A} (\deg(L_{*1}), \ldots , \deg(L_{*r})).$$ Therefore,
$\POL^{ss,neig}_{d, \, 2r, \,  m} \subseteq \overline{\mathcal{A} \left( \left\lceil \frac{d}{2} \right\rceil, \ldots , \left\lceil \frac{d}{2} \right\rceil \right)} \subseteq \POL^{ss}_{d, \, 2r, \,  m}$ follows from Theorem \ref{thm.dsets}, and the equality $\overline{\mathcal{A} \left( \left\lceil \frac{d}{2} \right\rceil, \ldots , \left\lceil \frac{d}{2} \right\rceil \right)} = \POL^{ss}_{d, \, 2r, \,  m}$ from Lemma \ref{lem.1incl}. Finally, \eqref{eq.Gset2} and Definition \ref{def.dsets} yield $\mathcal{A} \left( \left\lceil \frac{d}{2} \right\rceil, \ldots , \left\lceil \frac{d}{2} \right\rceil \right) \subseteq \mathcal{G}^{ss}_{d, \, 2r, \, m} \subseteq \POL^{ss}_{d, \, 2r, \,  m}$. Taking the closures in this chain of inclusions completes the proof.
\end{proof}

\section{Distance to rank-deficient skew-symmetric matrix polynomials}
\label{sec:poldist}

Let $P(\lambda) = \sum_{i= 0}^d \lambda^iP_i,\text{ } P_i \in \mathbb{C}^{m \times m}, \text{ } i =0,...,d$, be a given matrix polynomial of degree $d$. Our goal is to find a matrix polynomial in $\POL^{ss}_{d, \, 2r, \, m}=\POL_{2r}$ that is as near as possible to $P(\lambda)$, i.e., to minimize the distance from $P(\lambda)$ to $\POL_{2r}$, 
\begin{equation} \label{pro:minp}
\dist_{\POL_{2r}}\left( \sum_{i= 0}^d \lambda^iP_i\right) = \min_{\sum_{i= 0}^d \lambda^iS_i \in \POL_{2r}} \left( \sum_{i= 0}^d \ || P_i -S_i ||^2_F \right)^{1/2}.
\end{equation}
This problem includes the well-known problem of finding the {\it distance to singularity}\footnote{ Recall that any skew-symmetric matrix polynomial of odd size is automatically singular. Therefore, if $m$ is odd the nearest singular skew-symmetric matrix polynomial to $P(\lambda)$ is just the nearest skew-symmetric matrix polynomial, which can be easily found by finding the nearest skew-symmetric matrix in the Frobenius norm to each of the matrix coefficients of $P(\lambda)$.} when $m$ is even
\begin{equation} \label{pro:distsing}
\dist_{\POL_{m-2}}\left( P \right) = \dist_{sing}\left( P \right) := \min \left\{ ||\Delta P||_F: (P+\Delta P)(\lambda) \text{ is singular} \right\}.
\end{equation}

%
Using Theorem \ref{thm.mainskew} we can substitute the minimization problem \eqref{pro:minp} with the following minimization problem (for brevity, we write $\mathcal{G}_{2r}$ instead of $\mathcal{G}^{ss}_{d, \, 2r, \, m }$)

\begin{equation} \label{pro:minc}
\begin{aligned}
&\dist_{\POL_{2r}}\left(\sum_{i= 0}^d \lambda^iP_i\right) = \min_{\sum_{i= 0}^d \lambda^iS_i \in \mathcal{G}_{2r}} \left( \sum_{i= 0}^d \ || P_i -S_i ||^2_F \right)^{1/2} \\ 
&= \min_{\sum_{i= 0}^d \lambda^iS_i \in \mathcal{G}_{2r}} \left\| \left[ \begin{array}{c}
P_0 \\
P_1  \\
 \vdots \\
P_{d}  \\
\end{array}\right] - \left[ \begin{array}{c}
S_0 \\
S_1  \\
 \vdots \\
S_{d}  \\
\end{array}\right] \right\|_F 
= \min_{\sum_{i= 0}^d \lambda^iS_i \in \mathcal{G}_{2r}} \left\| \left[ \begin{array}{c}
\vect(P_0) \\
\vect(P_1)  \\
 \vdots \\
\vect(P_{d})  \\
\end{array}\right] - \left[ \begin{array}{c}
\vect(S_0) \\
\vect(S_1)  \\
 \vdots \\
\vect(S_{d})  \\
\end{array}\right] \right\|_F,
\end{aligned}
\end{equation}
where $\vect (\cdot)$ denotes the operator that stacks the columns of a matrix into one long column vector \cite[Def. 4.2.9]{HoJo94}.
The reason for such reformulation is that the matrix polynomials $S(\lambda) \in \mathcal{G}_{2r}$ have a very special structure that we can use to develop an algorithm for solving the minimization problem. In the sequel, for simplicity, given a matrix polynomial $S(\lambda) = \sum_{i= 0}^d \lambda^iS_i $, we define
\begin{equation} \label{def.vecpoly}
 \vect (S(\lambda)) := \left[ \begin{array}{c}
\vect(S_0) \\
\vect(S_1)  \\
 \vdots \\
\vect(S_{d})  \\
\end{array}\right].
\end{equation}
From Theorem~\ref{thm.mainskew} every element in $\mathcal{G}_{2r}$ can be written as
$$S(\lambda)= U(\lambda)V(\lambda)^T - V(\lambda)U(\lambda)^T,$$
with $V(\lambda) \in \POL^{m\times r}_{\displaystyle \left\lceil d/2\right\rceil}$ and $U(\lambda) \in \POL^{m\times r}_{\displaystyle \left\lfloor d/2\right\rfloor}$, and, vice versa, every matrix polynomial of this form is in $\mathcal{G}_{2r}$.
Equivalently, 
\begin{equation} \label{linsyst}
\begin{aligned}
    \vect (S(\lambda))&= \vect (U(\lambda)V(\lambda)^T - V(\lambda)U(\lambda)^T) \\
    &= - {\bf M}(U(\lambda)) \, \vect(V(\lambda))  
    = {\bf M}(V(\lambda)) \, \vect(U(\lambda)), 
\end{aligned}
\end{equation}
where an $(d+1)m^2 \times (d-t+1)mr$ matrix ${\bf M}(W(\lambda))$ 
is defined for given matrix polynomials $W(\lambda) = \sum_{i= 0}^t \lambda^iW_i$ of size $m\times r$ and grade $t$, as follows 
\begin{equation}
    \begin{aligned}
&{\bf M} = {\bf M}(W(\lambda)) \\
&=
\scriptsize
\left[ \begin{array}{c c c c}
 (W_0 \otimes I) - (I \otimes W_0)N & 0 && 0 \\
 \vdots & (W_0 \otimes I) - (I \otimes W_0)N && \vdots  \\
 (W_{t-1}  \otimes I) - (I \otimes W_{t-1} )N   & \vdots && 0  \\
  (W_{t} \otimes I) - (I \otimes W_{t})N & (W_{t-1}  \otimes I) - (I \otimes W_{t-1} )N && (W_0 \otimes I) - (I \otimes W_0)N  \\
  0 & (W_{t} \otimes I) - (I \otimes W_{t})N  &\ddots& \vdots \\
  \vdots & 0 && (W_{t-1}  \otimes I) - (I \otimes W_{t-1} )N \\
  0 & \vdots && (W_{t} \otimes I) - (I \otimes W_{t})N  \\
\end{array}\right],
    \end{aligned}
\end{equation}
where N is the $mr \times mr$ permutation matrix that can ``transpose'' vectorized $m \times r$ matrices, i.e., $\vect(X^T)=N \vect(X) $ for any $m \times r$ matrix $X$ \cite[Th. 4.3.8]{HoJo94}, and the identity matrix has size $m\times m$.
Now we can rewrite \eqref{pro:minc} using \eqref{linsyst} in two different ways as follows:  
\begin{align} \label{pro:final1}
\dist_{\POL_{2r}}\left(\sum_{i= 0}^d \lambda^iP_i\right) &=  \min_{\scriptsize \begin{array}{l} 
V(\lambda) \in \POL^{m\times r}_{\left\lceil d/2\right\rceil}, \\[1mm] 
U(\lambda) \in \POL^{m\times r}_{\left\lfloor d/2\right\rfloor}
\end{array}} \left\| \vect(P(\lambda)) + {\bf M}(U(\lambda)) \, \vect(V(\lambda))  \right\|_F \\ 
&=  \min_{\scriptsize \begin{array}{l} 
V(\lambda) \in \POL^{m\times r}_{\left\lceil d/2\right\rceil}, \\[1mm]  
U(\lambda) \in \POL^{m\times r}_{\left\lfloor d/2\right\rfloor}
\end{array}} \left\| \vect(P(\lambda)) - {\bf M}(V(\lambda)) \, \vect(U(\lambda))\right\|_F. \label{pro:final2}
\end{align}
%
%
Following \cite{DDDG26}, we solve \eqref{pro:final1}-\eqref{pro:final2} using alternating least squares, see, e.g.,~\cite{LiLR16, SrJa03}. To be exact, we fix $U(\lambda)$ and use formulation \eqref{pro:final1} to find $V(\lambda)$ by solving
$$
 \min_{\vect(V(\lambda)) \in \mathbb{C}^{mr(\left\lceil d/2\right\rceil + 1)}} \left\| \vect(P(\lambda)) + {\bf M}(U(\lambda)) \, \vect(V(\lambda)) \right\|_F,
$$
then use the obtained $V(\lambda)$ in formulation \eqref{pro:final2} to find $U(\lambda)$ by solving 
$$
 \min_{\vect(U(\lambda)) \in \mathbb{C}^{mr(\left\lfloor d/2\right\rfloor + 1)}} \left\| \vect(P(\lambda)) - {\bf M}(V(\lambda)) \, \vect(U(\lambda))  \right\|,
$$
and repeat the two steps with the computed $U(\lambda)$ until convergence.
Moreover, solutions to these minimization problems are found simply by using Matlab's backslash command, i.e., $\vect(V(\lambda)) := -{\bf M}(U(\lambda)) \backslash \vect(P(\lambda))$ and $\vect(U(\lambda)) := {\bf M}(V(\lambda)) \backslash \vect(P(\lambda))$. 

Note that the sequence of distances $\rho_k := \left(\sum_{i=0}^d \|P_i - S_i^{(k)} \|_F^2 \right)^{1/2}$, where $S^{(k)} := U^{(k)} (\lambda) V^{(k)}  (\lambda)^T - V^{(k)}  (\lambda) U^{(k)}  (\lambda)^T$, generated by this alternating least squares approach is always convergent, since $\rho_k$ is a monotonic non-increasing sequence bounded below by zero.

\section{Skew-symmetric matrix pencils}
\label{pencils}  

This section is devoted to grade-one matrix polynomials, also known as matrix pencils. Here, we recall some definitions and results for skew-symmetric matrix pencils, in a similar way as was done for skew-symmetric matrix polynomials in Section \ref{sect.prempolys}. The reason for treating pencils separately is that our approach to solving the distance problems allows a significant improvement in this case; see Section \ref{sec:distpen} and Example \ref{ex:pen}.

We start by recalling the Kronecker-type canonical form of skew-symmetric matrix pencils under congruence. 
For each $k=1,2, \ldots $, { and for each $\mu \in \mathbb{C}$} define the $k\times k$
matrices
\begin{equation*}
J_k(\mu):=\begin{bmatrix}
\mu&1&&\\
&\mu&\ddots&\\
&&\ddots&1\\
&&&\mu
\end{bmatrix},\qquad
I_k:=\begin{bmatrix}
1&&&\\
&1&&\\
&&\ddots&\\
&&&1
\end{bmatrix},
\end{equation*}
and for each $k=0,1, \ldots $, define the $k\times
(k+1)$ matrices
\begin{equation*}
F_k :=
\begin{bmatrix}
0&1&&\\
&\ddots&\ddots&\\
&&0&1\\
\end{bmatrix}, \qquad
G_k :=
\begin{bmatrix}
1&0&&\\
&\ddots&\ddots&\\
&&1&0\\
\end{bmatrix}.
\end{equation*}
All non-specified entries of $J_k(\mu), I_k, F_k,$ and $G_k$ are zeros.

An $m \times m$ matrix pencil $A - \lambda B$ is called {\it congruent} to $C - \lambda D$ if there is a non-singular matrix $S$ such that $S^{T}AS =C$ and $S^{T}BS=D$.
We also define  the {\it congruence orbit} of a skew-symmetric matrix pencil $A - \lambda B$ under the action of the group $GL_m (\mathbb C)$ on the space of all skew-symmetric matrix pencils by congruence as follows:
\[
\orb^c (A - \lambda B) = \{S^{T} (A - \lambda B) S \ : \ S \in GL_m(\mathbb C)\}.
\]
%
In Theorem \ref{lkh} we recall the canonical form under congruence of skew-symmetric matrix pencils.
\begin{theorem}[\cite{Thom91}]\label{lkh}
Each skew-symmetric $m \times m$ matrix pencil $A - \lambda B$ is congruent
to a direct sum,
uniquely determined up
to permutation of
summands, of pencils of
the form
\begin{align*}
{\cal H}_{h}(\mu)&:=
\begin{bmatrix}0&J_h(\mu)\\
-J_h(\mu)^T &0
\end{bmatrix} - \lambda
\begin{bmatrix}0&I_h\\
-I_h &0
\end{bmatrix}
,\quad \mu \in\mathbb C,\\
{\cal K}_k&:=
\begin{bmatrix}0&I_k\\
-I_k&0
\end{bmatrix} - 
\lambda
\begin{bmatrix}0&J_k(0)\\
-J_k(0)^T &0
\end{bmatrix}, \\
{\cal M}_m&:=
\begin{bmatrix}0&F_m\\
-F_m^T &0
\end{bmatrix} - \lambda
\begin{bmatrix}0&G_m\\
-G_m^T&0
\end{bmatrix}.
\end{align*}
\end{theorem}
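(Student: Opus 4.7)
The plan is to derive this canonical form by combining the classical Kronecker canonical form for general pencils under strict equivalence with the additional constraints imposed by skew-symmetry, and then by upgrading that strict equivalence to a congruence.

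First, I would record that any $m\times m$ pencil $A-\lambda B$ is strictly equivalent to a direct sum of finite Jordan blocks $J_h(\mu)-\lambda I_h$, infinite Jordan blocks $I_k-\lambda J_k(0)$, right singular blocks $L_k=F_k-\lambda G_k$, and left singular blocks $L_k^T$, with a uniquely determined list of elementary divisors and minimal indices. When $A^T=-A$ and $B^T=-B$, the identity $y(\lambda)^T(A-\lambda B)=0 \iff (A-\lambda B)\bigl(-y(\lambda)\bigr)=0$ (obtained by transposing) shows that the left and right rational null spaces coincide. Hence the left and right minimal indices are equal, so singular Kronecker blocks come in paired bundles $L_k\oplus L_k^T$, which a permutation congruence brings into the form $\mathcal{M}_k$. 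A parallel pairing argument, based on applying the same transposition identity to root vectors, should force each finite Jordan block $J_h(\mu)$ to occur twice, yielding the $\mathcal{H}_h(\mu)$ block, and each infinite Jordan block to occur twice, yielding $\mathcal{K}_k$. At the level of invariants this already pins down the list of summands.

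The substantive task is to realize the canonical form by a congruence $S^T(A-\lambda B)S$ rather than a general strict equivalence. I would proceed in three steps. \textbf{(i) Regular–singular splitting under congruence:} take a minimal basis of the common left/right null space and use skew-symmetry to identify an invariant isotropic subspace for both forms $A$ and $B$; complete it to a basis in which the pencil becomes block anti-diagonal, with the singular part isolated as a direct sum of $\mathcal{M}_m$ blocks. \textbf{(ii) Canonical form of the regular skew-symmetric part:} proceed by induction on $m$; for each finite eigenvalue $\mu$, construct a pair of generalized eigenchains that spans a skew-symmetric invariant subspace congruent to $\mathcal{H}_h(\mu)$, split it off, and iterate, treating the infinite eigenvalue analogously to produce $\mathcal{K}_k$. \textbf{(iii) Uniqueness:} follows from the uniqueness of the Kronecker canonical form under strict equivalence, since the elementary divisors and minimal indices are strict-equivalence (hence congruence) invariants, and the multiplicities of $\mathcal{H}_h(\mu)$, $\mathcal{K}_k$, and $\mathcal{M}_m$ in the claimed form are in bijection with those invariants.

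The main obstacle will be step (i). In the unstructured setting the regular–singular splitting is a routine consequence of Kronecker's theory, but here it must be realized by a single matrix $S$ acting simultaneously on $A$ and $B$ by congruence. This forces one to find a subspace that is invariant for both forms and admits a congruence-compatible complement, essentially a skew-symmetric, $\lambda$-dependent analogue of Witt's decomposition. Carefully tracking how the chosen minimal basis pairs with the two skew forms, and choosing dual vectors inside the ambient space to produce the correct $\mathcal{M}_m$ block shape, is where I expect the longest and most delicate argument. Once this splitting is secured, steps (ii) and (iii) are comparatively routine reductions.
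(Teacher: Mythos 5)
The paper does not prove this theorem: it is quoted verbatim from Thompson's work \cite{Thom91} (see also \cite{Rodm07}), so there is no internal proof to compare your sketch against; I can only judge the sketch on its own merits as a reconstruction of the classical argument.

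Your outline correctly identifies the three ingredients (Kronecker's canonical form, pairing of the structural invariants forced by skew-symmetry, and realization of the form by a congruence), and your treatment of the minimal indices and of uniqueness is sound. But two points are genuine gaps. First, your pairing argument for the regular part does not work as stated: the identity $y(\lambda)^T(A-\lambda B)=0 \iff (A-\lambda B)y(\lambda)=0$ only expresses that the pencil equals minus its transpose, and since \emph{every} square pencil is strictly equivalent to its transpose, "applying the transposition identity to root vectors" yields no constraint on multiplicities. The even multiplicity of each finite and infinite elementary divisor is a real theorem; it needs, e.g., the skew-symmetric Smith form (Theorem~\ref{tsmiths} of this paper, from \cite{MMMM13}), or an argument based on the evenness of the rank of the constant skew-symmetric matrices $A-\lambda_0 B$ and of suitable derived pencils. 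Second, the heart of Thompson's theorem is precisely the step you defer: showing that the strict equivalence delivered by Kronecker's theory can be replaced by a single congruence $S^T(\cdot)S$. Your steps (i)--(ii) describe \emph{what} must be constructed (isotropic invariant subspaces splitting off $\mathcal{M}_k$, paired eigenchains spanning subspaces congruent to $\mathcal{H}_h(\mu)$ and $\mathcal{K}_k$) but not \emph{how} the two skew forms are simultaneously block-decoupled and how the duality pairing inside each block is normalized; this is exactly the content of the classical proofs (Thompson's, or the route via the general lemma that two complex skew-symmetric pencils are congruent if and only if they are strictly equivalent). As it stands, the proposal is a plausible roadmap whose two load-bearing steps — even multiplicity of elementary divisors and the equivalence-to-congruence upgrade — are asserted rather than proved.
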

\noindent The blocks ${\cal H}_{h}(\mu)$ and ${\cal K}_k$ correspond to the finite and infinite eigenvalues, respectively, and all together form the {\em regular part} of $A - \lambda B$. The blocks ${\cal M}_m$ correspond to the right and left minimal indices, which are equal in the case of skew-symmetric matrix pencils, and form the {\em singular part} of $A - \lambda B$. Using this canonical form, we can now formulate Theorem \ref{thm:orbit}, which is a special case of Theorem \ref{anydth}.  
Define $\PEN_{2r,m}^{ss}$ as the set of $m \times m$ skew-symmetric matrix pencils of rank $\leq 2r$, i.e.,  $\PEN_{2r,m}^{ss}= \POL_{1, 2r, m}^{ss}$. 
\begin{theorem}[\cite{DmDo18}] \label{thm:orbit} 
    Let $m$ and $r$ be integers such that $2 \leq 2r \leq m-1$.
    Then  $\PEN_{2r,m}^{ss}=\overline{\operatorname{O}^c (\mathcal{W}^m_{2r}})$, where
    \begin{equation*}
        \mathcal{W}^m_{2r} = \operatorname{diag}(
        \underbrace{{\cal M}_{\alpha + 1}, \dots, {\cal M}_{\alpha + 1}}_{t},
        \underbrace{{\cal M}_\alpha, \dots, {\cal M}_\alpha}_{m-2r-t}),
    \end{equation*}
    with $\alpha = \lfloor \frac{r}{m-2r} \rfloor$ and $t = r\mod{(m-2r)}$. 
\end{theorem}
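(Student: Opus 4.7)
The plan is to read Theorem \ref{thm:orbit} as the specialization of Theorem \ref{anydth} to the pencil case $d=1$, together with the basic identification, valid for pencils, between the ``orbit by complete eigenstructure'' used in Theorem \ref{anydth} and the congruence orbit $\operatorname{O}^c(\cdot)$ introduced in this section. Since the cited result already delivers the closed description in terms of an abstract representative $W$ with a prescribed eigenstructure, what remains is essentially a bookkeeping check on parameters and a canonical-form argument for the equality of the two orbit notions.

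First, I apply Theorem \ref{anydth} with $d=1$. The integers therein become $\beta=\lfloor r/(m-2r)\rfloor=\alpha$ and $t=r\bmod(m-2r)$, which are exactly the parameters appearing in the statement of Theorem \ref{thm:orbit}. Theorem \ref{anydth} then yields $\PEN_{2r,m}^{ss}=\POL^{ss}_{1,2r,m}=\overline{\orb(W)}$, where $W$ is any $m\times m$ skew-symmetric pencil of degree exactly $1$, rank exactly $2r$, with no finite or infinite elementary divisors, whose left minimal indices (and, by skew-symmetry, right minimal indices) consist of $t$ copies of $\alpha+1$ and $(m-2r-t)$ copies of $\alpha$.

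Next, I check that $\mathcal{W}^m_{2r}$ is one admissible choice of $W$. The pencil $\mathcal{W}^m_{2r}$ is already written in the Kronecker-type canonical form of Theorem \ref{lkh}: it contains only singular blocks $\mathcal{M}_k$ and no regular blocks $\mathcal{H}_h(\mu)$ or $\mathcal{K}_k$, so it has no finite or infinite eigenvalues. Each $\mathcal{M}_k$ has size $(2k+1)\times(2k+1)$, rank $2k$, and contributes one left minimal index and one right minimal index equal to $k$. A direct count gives total size $t(2\alpha+3)+(m-2r-t)(2\alpha+1)=2[(m-2r)\alpha+t]+(m-2r)=2r+(m-2r)=m$, total rank $2(m-2r)\alpha+2t=2r$, and the prescribed $t$ minimal indices equal to $\alpha+1$ and $(m-2r-t)$ equal to $\alpha$. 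Hence $\mathcal{W}^m_{2r}$ has exactly the complete eigenstructure of $W$.

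Finally, I identify the two orbit notions. The set $\orb(W)$ from Section \ref{sect.prempolys} collects all pencils with the same complete eigenstructure as $W$, while $\operatorname{O}^c(\mathcal{W}^m_{2r})$ collects all pencils congruent to $\mathcal{W}^m_{2r}$. By the uniqueness clause in Theorem \ref{lkh}, two skew-symmetric pencils share the same canonical form (up to permutation of summands) if and only if they are congruent; and the canonical form encodes precisely the finite and infinite elementary divisors together with the minimal indices, i.e., the complete eigenstructure. Therefore $\orb(W)=\operatorname{O}^c(\mathcal{W}^m_{2r})$, and taking closures gives $\PEN_{2r,m}^{ss}=\overline{\orb(W)}=\overline{\operatorname{O}^c(\mathcal{W}^m_{2r})}$, as claimed. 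The only genuine ``hard part'' in this chain is Theorem \ref{anydth} itself, whose proof is external; here there are no further substantive difficulties beyond the canonical-form identification and the parameter computation above.
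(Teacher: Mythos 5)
Your derivation is correct and matches the paper's own treatment: the paper presents Theorem \ref{thm:orbit} as exactly the $d=1$ special case of Theorem \ref{anydth} (citing \cite{DmDo18} for the original), which is the route you take. Your added bookkeeping—verifying that $\mathcal{W}^m_{2r}$ realizes the prescribed eigenstructure and that $\orb(W)=\operatorname{O}^c(\mathcal{W}^m_{2r})$ via the uniqueness in Theorem \ref{lkh}—is sound and simply makes the specialization explicit.
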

\noindent Similarly, the following theorem was proven earlier in \cite{DeMM22}. Note that, with the notation of Theorem \ref{thm.mainskew}, an immediate consequence of Theorem \ref{thm:decomp} is that $\PEN_{2r,m}^{ss} = \mathcal{G}^{ss}_{1,2r,m}$,  {\em without involving the closure of} $\mathcal{G}^{ss}_{1,2r,m}$. Thus, the result for skew-symmetric pencils is stronger than for skew-symmetric matrix polynomials of grade larger than $1$. It is possible to prove that the use of the closure is necessary in Theorem \ref{thm.mainskew}.

\begin{theorem}[Rank-1 decomposition of skew-symmetric matrix pencils from \cite{DeMM22}] \label{thm:decomp}
    Let $S_0 - \lambda S_1 \in \PEN_{2r,m}^{ss}$. Then
    \begin{equation*}
        S_0 - \lambda S_1 = \sum_{i=1}^r u_i v_i^T - v_i u_i^T,
    \end{equation*}
    for some vectors $u_1, \dots, u_r \in \mathbb{C}^m$ and polynomial vectors $v_1, \dots, v_r \in \mathbb{C} [\lambda]^m$ of degree $\leq 1$.
\end{theorem}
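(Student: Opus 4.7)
The plan is to invoke the Kronecker canonical form under congruence (Theorem \ref{lkh}) and decompose each elementary block explicitly, then transport the decomposition back through the congruence.

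First, by Theorem \ref{lkh} there exists $S \in GL_m(\mathbb{C})$ such that $S^T(S_0 - \lambda S_1)S$ is a direct sum of elementary blocks of the types $\mathcal{H}_h(\mu)$, $\mathcal{K}_k$, $\mathcal{M}_m$, possibly augmented by a zero block that accounts for the rank deficiency. Because congruence preserves the normal rank, the (even) ranks $2h$, $2k$, $2m$ of the elementary blocks sum to at most $2r$. Hence it is enough to decompose each elementary block of rank $2s$ into $s$ rank-$2$ skew-symmetric pencils of the required shape; summing these decompositions over all blocks gives a representation of the canonical form with at most $r$ summands.

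Second, I would verify, by direct entry-by-entry comparison, the explicit formulas
\begin{align*}
\mathcal{H}_h(\mu) &= \sum_{i=1}^{h}\bigl(e_i\,v_i^T - v_i\,e_i^T\bigr), & v_i(\lambda) &= (\mu - \lambda)\,e_{h+i} + e_{h+i+1}, \\
\mathcal{K}_k &= \sum_{i=1}^{k}\bigl(e_i\,v_i^T - v_i\,e_i^T\bigr), & v_i(\lambda) &= e_{k+i} - \lambda\,e_{k+i+1}, \\
\mathcal{M}_m &= \sum_{i=1}^{m}\bigl(e_i\,v_i^T - v_i\,e_i^T\bigr), & v_i(\lambda) &= -\lambda\,e_{m+i} + e_{m+i+1},
\end{align*}
where $e_j$ denotes the $j$-th standard basis vector of the block's ambient space, and $e_{2h+1} := 0$, $e_{2k+1} := 0$ are the conventions for the last value of $i$ in the first two formulas. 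In each case $u_i := e_i$ is constant and $v_i(\lambda)$ has degree at most $1$, and the number of summands equals half the rank of the block. The checks reduce to inspecting the super/subdiagonal patterns of $J_h(\mu)-\lambda I_h$, $I_k - \lambda J_k(0)$, and $F_m - \lambda G_m$ against the top-right blocks of $\mathcal{H}_h(\mu)$, $\mathcal{K}_k$, $\mathcal{M}_m$ and using skew-symmetry for the bottom-left blocks.

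Third, I would embed each pair $(u_i, v_i)$ into $\mathbb{C}^m$ by zero-padding the coordinates outside its block, collect the decompositions across all elementary blocks to obtain $S^T(S_0-\lambda S_1)S = \sum_{i=1}^{s}\bigl(u_i v_i^T - v_i u_i^T\bigr)$ with $s \leq r$, and conjugate by $(S^{-1})^T(\cdot)S^{-1}$. Since $(S^{-1})^T$ is a constant invertible matrix, $\tilde{u}_i := (S^{-1})^T u_i$ remains constant in $\mathbb{C}^m$, and $\tilde{v}_i(\lambda) := (S^{-1})^T v_i(\lambda)$ remains of degree at most $1$. If $s < r$, the sum is padded by $r-s$ trivial summands with $u_i = v_i = 0$, producing exactly $r$ rank-$2$ terms.

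The main obstacle is not conceptual but combinatorial: handling the three case-by-case verifications carefully—especially the last index in each $\mathcal{H}_h(\mu)$ and $\mathcal{K}_k$ block, where $e_{\cdot+1}$ falls outside the block and is set to zero—and keeping the bookkeeping consistent when assembling the elementary decompositions within a direct sum and then transporting them by the congruence. No machinery beyond Theorem \ref{lkh} and elementary properties of congruence is needed.
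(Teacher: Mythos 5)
Your proposal is correct and follows essentially the same route the paper indicates for Theorem \ref{thm:decomp}: the paper (citing \cite{DeMM22}) only sketches the argument, namely decomposing each canonical block from Theorem \ref{lkh} into rank-2 skew-symmetric summands with constant $u_i$ and degree-$\le 1$ $v_i$, and transporting the result back through the congruence, which is exactly what you do. Your explicit formulas for $\mathcal{H}_h(\mu)$, $\mathcal{K}_k$, and $\mathcal{M}_m$ (including the zero-padding convention for the last index and the count of summands equal to half the normal rank of each block) verify correctly, so the write-up fills in the details the paper leaves to the cited reference.
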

\noindent The decomposition in Theorem \ref{thm:decomp} is achieved by decomposing each block in the canonical form in Theorem \ref{lkh} into a sum of rank-1 matrix pencils (a tool which is not available for skew-symmetric matrix polynomials of grade larger than $1$). Also note that for a pencil $A - \lambda B$ \textit{not} in the canonical form, this decomposition is still possible. Let $S$ be a matrix such that $S^{-1} (A - \lambda B) S^{-T}$ is in the canonical form. Then, a rank-1 decomposition of $A - \lambda B$ is done by taking $v_i = S\tilde{v}_i$ and $u_i = S\tilde{u}_i$, where $\tilde{v}_i$ and $\tilde{u}_i$ are the vectors in the decomposition of the matrix pencil in the canonical form.

Note also that since skew-symmetric pencils always have even rank, the rank-1 pencils in Theorem \ref{thm:decomp} come in pairs ($u_i v_i^T$ and $v_i u_i^T$) that form skew-symmetric pencils ($u_i v_i^T - v_i u_i^T$) of rank 2, when the rank of $S_0 - \lambda S_1$ is exactly $2r$.

\section{Distance to rank-deficient skew-symmetric matrix pencils}
\label{sec:distpen}

%
The fact that the vectors $u_i$ in Theorem \ref{thm:decomp} are constant vectors allows us to improve considerably the algorithm described in Section \ref{sec:poldist} in the pencil case. Further improvements are achieved if the input pencil is skew-symmetric. Therefore, in this section we specifically deal with skew-symmetric input pencils. Given an $m \times m$ complex skew-symmetric matrix pencil $A - \lambda B$, we find a nearby skew-symmetric matrix pencil $S_0 - \lambda S_1$ of rank $\leq 2r$, with $2 \leq 2r < m$. 
By Theorem \ref{thm:decomp}, we can assume that $S_0 - \lambda S_1$ is in the form 
\[
    S_0 = U V_0^T - V_0 U^T \;\; \text{and} \;\; S_1 = U V_1^T - V_1 U^T,
\]
where $U=[u_1, \dots , u_r]$, and $V_0 - \lambda V_1 = [v_1, \dots , v_r]$.  Denoting for simplicity $\PEN^{ss}_{2r,m}$ just by $\PEN_{2r}$, our distance problem is reformulated in the pencil case as follows 
\begin{equation} \label{pro:mincpen}
\begin{aligned}
\dist_{\PEN_{2r}}\left(A - \lambda B\right) &= \min_{S_0 - \lambda S_1 \in \PEN_{2r}} \left(|| A -S_0 ||^2_F +|| B -S_1 ||^2_F \right)^{1/2} \\ 
&= \min_{U,V_0,V_1 \in \mathbb{C}^{m \times r}} \left\| \left[ \begin{array}{c}
A \\
B  \\
\end{array}\right] - \left[ \begin{array}{c}
U V_0^T -V_0 U^T \\
U V_1^T -V_1 U^T   \\
\end{array}\right] \right\|_F. 
\end{aligned}
\end{equation}
Similarly to Section \ref{sec:poldist}, we solve \eqref{pro:mincpen} using a method of alternating directions. As before, we fix $U$ to find $V_0$ and $V_1$, i.e., we solve
\begin{equation*}
\begin{aligned}
\min_{V_0,V_1 \in \mathbb{C}^{m \times r}} \left\| \left[ \begin{array}{c}
A \\
B \\
\end{array}\right] - \left[ \begin{array}{c}
U V_0^T -V_0 U^T \\
U V_1^T -V_1 U^T \\
\end{array}\right] \right\|_F, 
\end{aligned}
\end{equation*}
but now this problem is equivalent to solving the following two minimization problems independently 
\begin{equation} \label{split}
\min_{V_0\in \mathbb{C}^{m \times r}} \left\| A  - (U V_0^T -V_0 U^T) \right\|_F \quad \text{ and } \quad  \min_{V_1\in \mathbb{C}^{m \times r}} \left\| B  - (U V_1^T -V_1 U^T) \right\|_F. 
\end{equation}
Then, we use the obtained $V_0$ and $V_1$ to find $U$, i.e., we solve 
\begin{equation} \label{pen.u}
\begin{aligned}
\min_{U \in \mathbb{C}^{m \times r}} \left\| \left[ \begin{array}{c}
A \\
B \\
\end{array}\right] - \left[ \begin{array}{c}
U V_0^T -V_0 U^T \\
U V_1^T -V_1 U^T \\
\end{array}\right] \right\|_F.  
\end{aligned}
\end{equation}
Now we explain how to solve each of the problems in \eqref{split}. 

We assume for this purpose that $U$ has full rank $r$, which is the generic situation in the algorithm, but we will make some comments at the end on how to proceed when $U$ is rank deficient. Note that, taking the SVD of $U= RST^H$,  with $R \in \mathbb{C}^{m\times m}$ and $T \in \mathbb{C}^{r\times r}$ unitary (orthogonal in the real case), yields the following minimization problem equivalent to the first one in \eqref{split}
\begin{equation}\label{eq:svd} 
\min_{V_0\in \mathbb{C}^{m \times r}} \left\|  R^H A \overline{R} - (S (R^H V_0 \overline{T})^T - (R^H V_0 \overline{T}) S^T) \right\|_F . 
\end{equation}
An analogous problem can be obtained for the second minimization problem  for $V_1$ in \eqref{split} with $R^H B \overline{R}$ instead of $R^H A \overline{R}$.
Let $X := R^H V_0 \overline{T}$ and $E := R^H A \overline{R}$, and note that $E$ is skew-symmetric. Denote by $S_1$ the $r \times r$ diagonal block of $S$ and by $X_1$ and $X_2$ the $r$ first and $m-r$ last rows of $X$, that is,
\[
    S = \begin{bmatrix}
        S_1 \\
        0 \\
    \end{bmatrix} \quad \text{ and } \quad 
        X = \begin{bmatrix}
        X_1 \\
        X_2 \\
    \end{bmatrix}.
\]
Note that $S_1$ is invertible if $U$ has rank $r$. By making the corresponding partition of $E =  \left[ \begin{smallmatrix}
        E_{11} & E_{12} \\
        E_{21} & E_{22} \\
    \end{smallmatrix} \right]$, we obtain from \eqref{eq:svd} the following equivalent minimization problem
\begin{equation} \label{eq:svd_mat} 
\min_{X_1 \in \mathbb{C}^{r\times r}, X_2 \in \mathbb{C}^{(m-r)\times r}}
\left\|\begin{bmatrix}
        E_{11} & E_{12} \\
        E_{21} & E_{22} \\
    \end{bmatrix} - \begin{bmatrix}
        S_1 X_1^T - X_1 S_1^T & S_1 X_2^T \\
        -X_2 S_1^T & 0 \\
    \end{bmatrix}\right\|_F.
\end{equation}
The problems for $X_1$ and $X_2$ decouple in \eqref{eq:svd_mat} and can be solved very efficiently by computing the exact solutions of two linear systems as we explain in the following. Since $E_{21} = -E_{12}^T$, the two off-diagonal blocks that contain $X_2$ are equivalent and $X_2 =  E_{12}^T S_1^{-1}$, which taking into account that $S_1$ is diagonal can be computed very efficiently.
As for $X_1$, we note that since $S_1$ is diagonal and $E_{11}$ is skew-symmetric, the $(i,j)$-element in $S_1 X_1^T - X_1 S_1^T = E_{11}$ is $s_{ii} x_{ji} - s_{jj} x_{ij} = e_{ij}$, while the $(j,i)$-element is  $s_{jj} x_{ij} - s_{ii} x_{ji} = e_{ji} = -e_{ij}$. So, both lead to the same equation for $x_{ij}$ and $x_{ji}$, when $i\ne j$, which can be chosen in infinitely many ways (as well as the diagonal elements of $X_1$ since there are no restriction over them).  In particular, we can choose $X_1$ to be skew-symmetric. Then, the equations simplify to $x_{ji} = \frac{e_{ij}}{s_{ii} + s_{jj}}$. When $X$ is calculated, $V_0$ is given by
\[
    V_0 = R \begin{bmatrix} X_1 \\ X_2 \end{bmatrix} T^T.
\]
In the same fashion, we obtain $V_1$. If the rank of $U$ is less than $r$, then some diagonal entries of $S_1$ are zero, but \eqref{eq:svd_mat} is still valid. In that scenario, we can use the Moore-Penrose inverse, $S_1^\dagger$, of $S_1$, which is a diagonal $r\times r$ matrix with $(S_1^\dagger)_{ii} = 1/s_{ii}$, when $s_{ii} \ne  0$, and zeros elsewhere. Then, $X_2 =  E_{12}^T S_1^{\dagger}$ is a least squares solution with minimum norm for $X_2$. A least squares minimal norm skew-symmetric solution for $X_1$ is obtained taking $x_{ji} =0$, when $s_{ii} = s_{jj} = 0$, and proceeding as in the full rank case for the other entries.

Now, fixing $V_0$ and $V_1$, we take another step of the alternating directions. We solve for $U$ in exactly the same way as in the case of polynomials of any degree derived in Section \ref{sec:poldist}, i.e.,   
\[
 \vect(U) :=   \begin{bmatrix} 
 \left( V_0 \otimes I_m \right) - \left( I_m\otimes V_0 \right)N  \\
  \left( V_1 \otimes I_m \right) - \left( I_m \otimes V_1 \right)N  \\
 \end{bmatrix} \ \backslash \ 
 \begin{bmatrix}
        \vect (A) \\
        \vect (B) \\
 \end{bmatrix}.
\]

\section{Numerical experiments} \label{sect.num}

In this section, we compare the developed algorithms with one another and, when possible, with other available algorithms. Specifically, we compute the distance to singularity for matrix polynomials using three methods (including ours), and we compute the distance to the set of matrix pencils of a specified bounded rank using two of our algorithms. The reason for this distinction is that the other available algorithms can compute only the distance to singularity, and not the distance to the nearest matrix polynomial of rank $2r$ (for arbitrary $r$). The experiments were run using MATLAB (R2024b Update 4) on MacBook Air (Apple M3, 16GB). The code is available on GitHub.\footnote{\url{https://github.com/rakeljh/dist_sing_skew_pol}}

Our algorithm uses Generic Eigenstructures and Alternating direction method for Rank $2r$ approximation of Skew-symmetric matrix polynomials, thus we abbreviate it to GEARS. We also use GEARS-SVD for the version with svd-decomposition for matrix pencils.   

\begin{example} \label{ex:pol}

We calculate a distance to singularity for skew-symmetric matrix polynomials using three different algorithms. Namely Riemman-Oracle (RO) \cite{GNNP24}  and Matrix-Valued Function (MVF) \cite{GnGu23,GnGu25}, as well as GEARS, the algorithm developed in this paper. We run each algorithm on 72 random skew-symmetric matrix polynomials, i.e., 4 random matrix polynomials for each combination of the size ($4 \times 4$, $6 \times 6$, or $8 \times 8$), degree (1, 2, or 3), and the underlying field (real or complex). The results are summarized in Figures~\ref{fig:4-6-8-1},\ref{fig:4-6-8-2}, and \ref{fig:4-6-8-3}. The graphs in Figure \ref{fig:4-6-8-1} show that all algorithms find a skew-symmetric matrix polynomial at essentially the same distance from a given skew-symmetric matrix polynomial. Nevertheless, our algorithm produces the answers that are ``the most singular''\footnote{We note that the MVF algorithm requires an assumption \cite[Assumption~4.8]{GnGu25}. This assumption used in the outer iteration of the method is that the smallest singular value of $(F + \Delta F)(\mu_j)$ is simple and non-zero for $j = 1, \dots , m$. In the case of skew-symmetric matrix polynomials, the singular values come in pairs and hence the smallest singular value is only simple if $n$ is odd, in which case it is zero. This may affect the performance of this method when used to find the nearest singular skew-symmetric matrix pencil.}, see Figure \ref{fig:4-6-8-2} (recall that the output of GEARS is singular by construction). Moreover, Figure \ref{fig:4-6-8-3} shows that GEARS is also the fastest out of the tested algorithms.  

\begin{figure}[ht]
    \hspace{-1.7cm}     \includegraphics[width=1.1\textwidth]{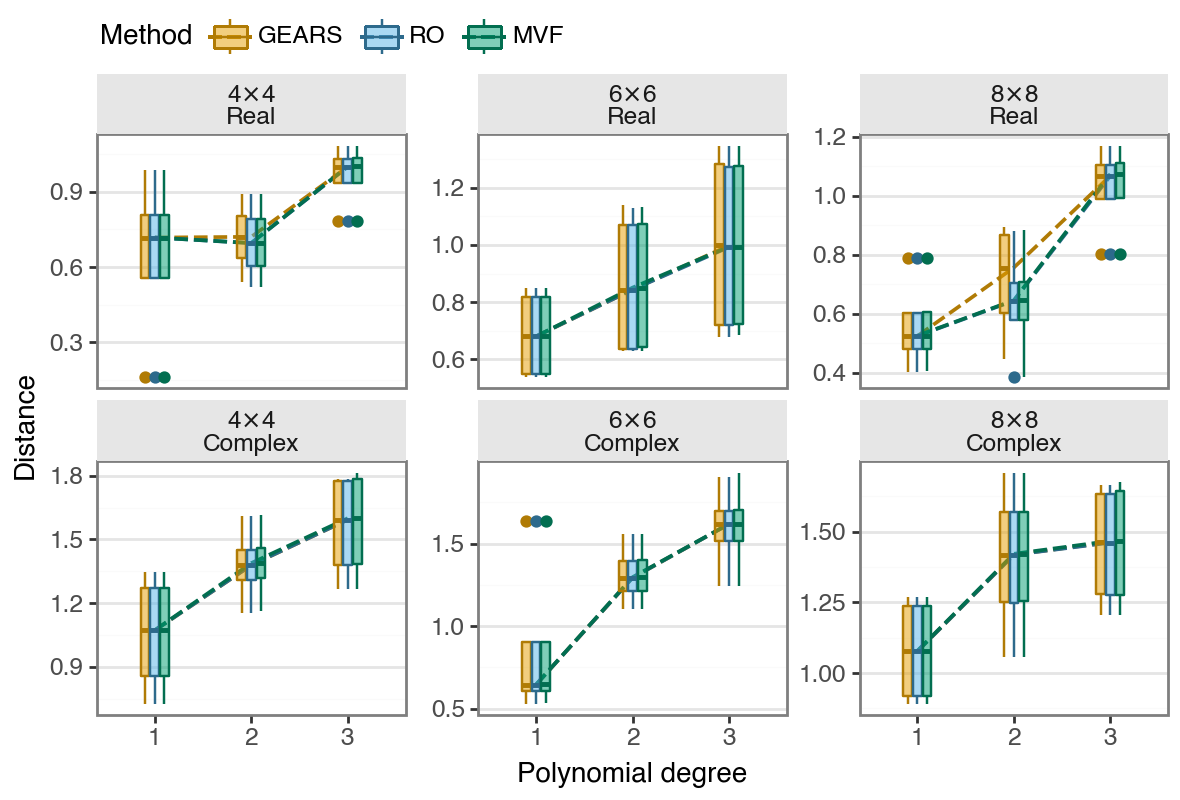}
    \caption{Comparison of the computed minimal distances to singularity for random skew-symmetric matrix polynomials of given size, degree, and with the entries in the underlying field (real or complex). All the algorithms find a skew-symmetric matrix polynomial at essentially the same distance from a given skew-symmetric matrix polynomial.
    }
    \label{fig:4-6-8-1}
\end{figure}

\begin{figure}[ht]
    \hspace{-1.7cm}         \includegraphics[width=1.1\textwidth]{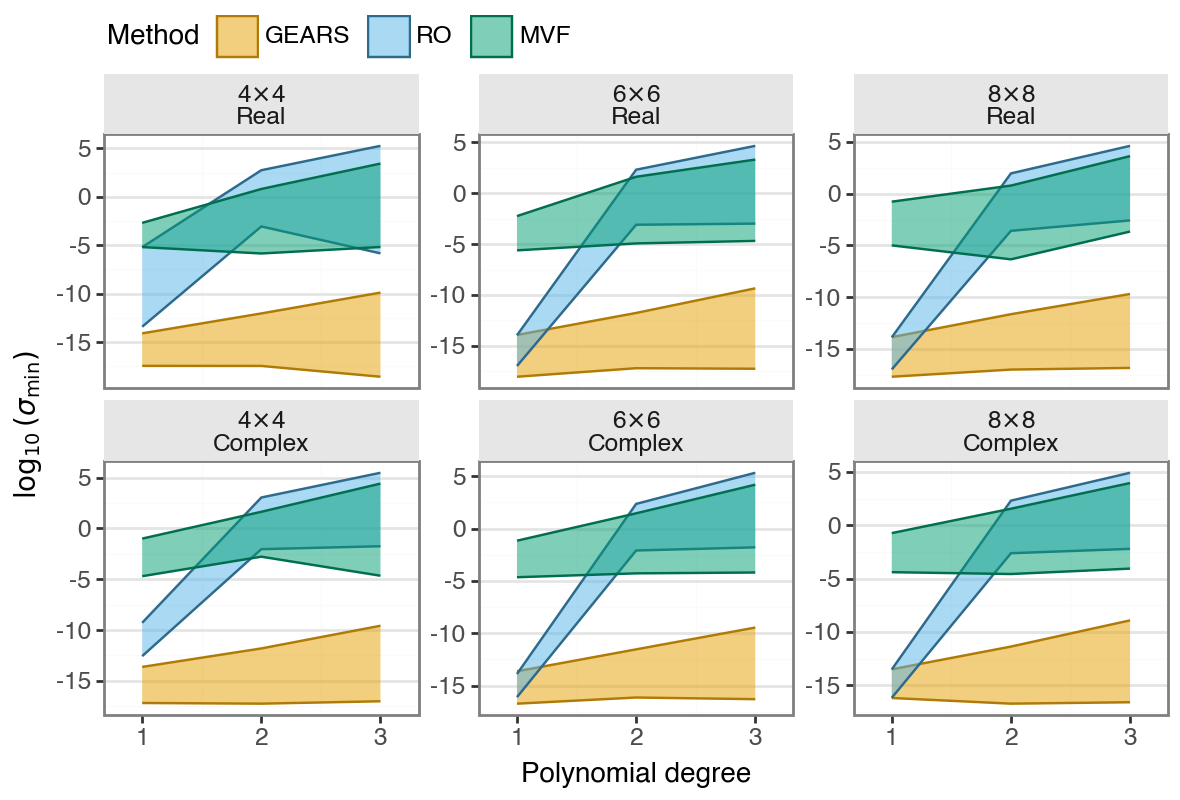}
    \caption{Comparison of the ``singularity'' of the output for random skew-symmetric matrix polynomials of given size, degree, and the entries in the underlying field (real or complex). To be exact, we follow \cite{GnGu25} and plot smallest singular value $\sigma_{\min}$ of the constructed matrix polynomial evaluated at the points ${x_j + \sqrt{-1} y_j}$ given by 
    $[X,Y] = meshgrid(-1000:40:1000)$. In all the performed experiments GEARS produces the outputs with significantly lower singular values. 
    }
    \label{fig:4-6-8-2}
\end{figure}

\begin{figure}[ht]
    \hspace{-1.7cm}     \includegraphics[width=1.1\textwidth]{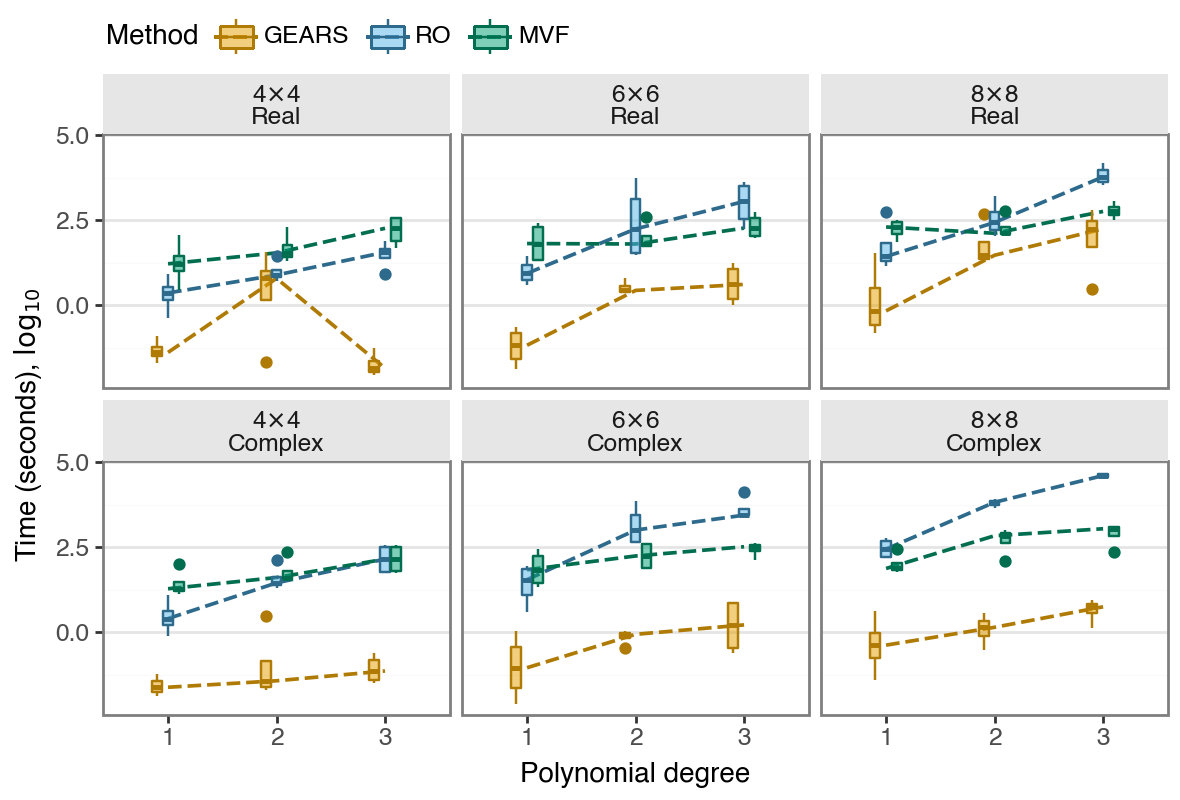}
    \caption{Comparison of the speed for random skew-symmetric matrix polynomials of given size, degree, and the entries in the underlying field (real or complex). In all the performed experiments GEARS is the fastest. 
    }
    \label{fig:4-6-8-3}
\end{figure}

In addition, to investigate how the computational cost of GEARS scales with the input size, we extend our experiments to random skew-symmetric matrix polynomials of degree $2$ with sizes ranging from $20\times20$ up to $50\times50$, over both the real and complex fields, Figures \ref{fig:deg2-large-dist} and \ref{fig:deg2-large-time}. 
We carefully controlled the quality of the computed output, roughly matching the accuracy reported in \cite{GNNP24} for $30\times30$ complex unstructured matrix polynomials of degree $2$.\footnote{To be exact we terminate the algorithm when the change in distance between consecutive iterations is less than $10^{-4}\cdot \|P\|_F$ for complex matrix polynomials and less than $10^{-3}\cdot \|P\|_F$ for real matrix polynomials.} Note that \cite{GNNP24} reports the runtime of around 300 seconds for $30\times30$ matrices, while our approach requires approximately 200 seconds for comparable output quality. In the real case, the algorithm is typically about one order of magnitude faster than in the complex setting when asked to maintain the same quality of the output. 
\begin{figure}[ht]
\includegraphics[width=0.98\textwidth]{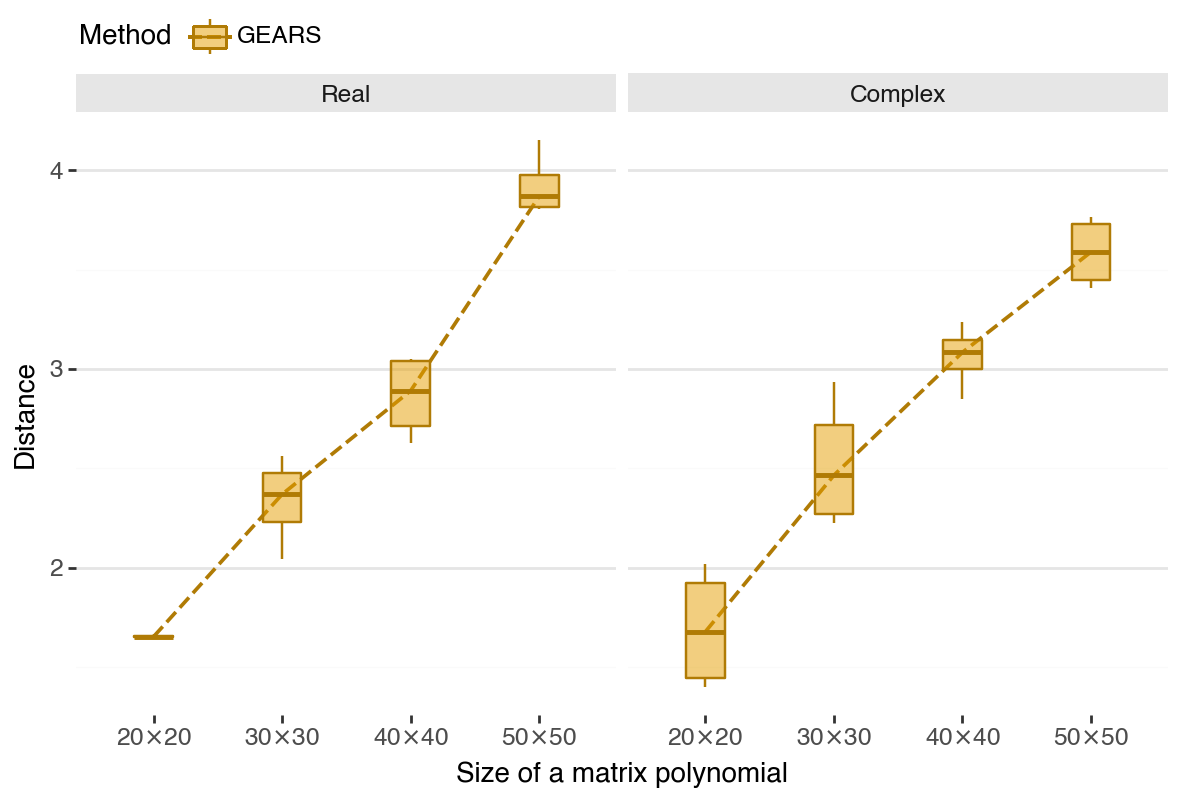}
    \caption{GEARS algorithm for computing the minimal distance to singularity for random skew-symmetric matrix polynomials of sizes $20\times20$, $30\times30$, $40\times40$, and $50\times50$, of degree $2$, with entries over the real or complex field (4 examples for each setting).}
    \label{fig:deg2-large-dist}
\end{figure}

\begin{figure}[ht]
    \includegraphics[width=0.98\textwidth]{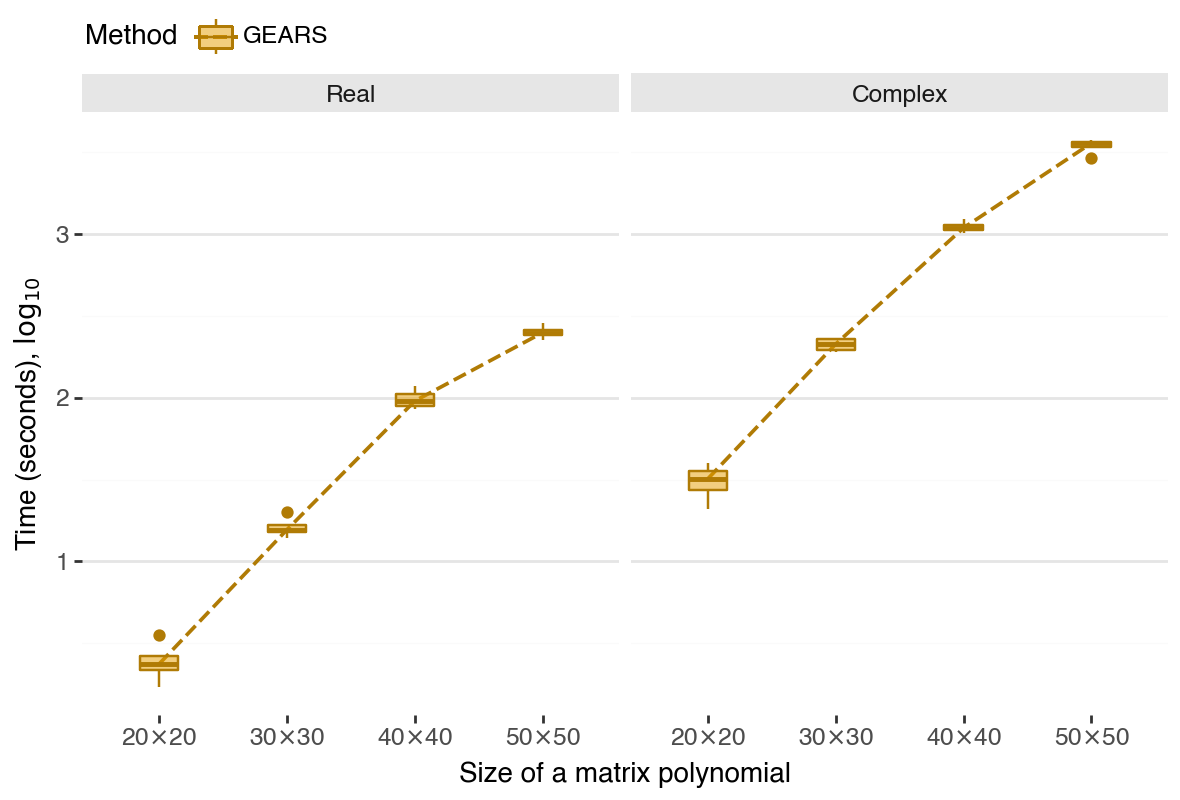} 
        \caption{Illustration of the time scaling of the GEARS algorithm as the input size increases. The plot shows the runtime for random skew-symmetric matrix polynomials of given sizes and degree $2$, with entries over the real or complex field (4 examples for each setting).}
    \label{fig:deg2-large-time}
\end{figure}

\end{example}

In Example \ref{ex:pol}, we use the code for polynomials, which works for any degree, including degree one. Nevertheless, Section \ref{pencils} and Theorem \ref{thm:decomp} show that the parametrization for matrix polynomials in Theorem \ref{thm.mainskew} simplifies significantly when we are dealing with matrix pencils. This has enabled us to develop the approach explained in Section \ref{sec:distpen}, which for one of the alternating directions step solves matrix equations directly, without reformulating it using the Kronecker product as a large system of linear equations. Figures \ref{fig:pencil_dist} and \ref{fig:pencil_time} illustrate the resulting time savings while maintaining accuracy for the skew-symmetric pencils generated in Example \ref{ex:pen}.

\begin{example} \label{ex:pen}
In this example we compare performance of the algorithms GEARS and GEARS-SVD developed in Sections \ref{sec:poldist} and \ref{sec:distpen}, respectively, on the matrix pencil input. We run each algorithm on 50 random skew-symmetric matrix pencils, i.e., 10 random complex skew-symmetric matrix pencils for each size (from $6 \times 6$ to $30 \times 30$), and compute the result for ranks $n-2$ for all sizes, and $\lfloor n/4 \rfloor$ for sizes $12 \times 12$ and up. The results are summarized in Figures \ref{fig:pencil_dist} and \ref{fig:pencil_time}. 

 As seen in Figure \ref{fig:pencil_time}, the computation time was improved when using GEARS-SVD compared to GEARS. The experiments were done with a maximum number of iterations, and since GEARS-SVD converges faster than GEARS, the distance results were also slightly better for large matrix pencil sizes when using GEARS-SVD. This is seen in Figure \ref{fig:pencil_dist}.

\begin{figure}
    \centering
    \includegraphics[width=0.92\linewidth]{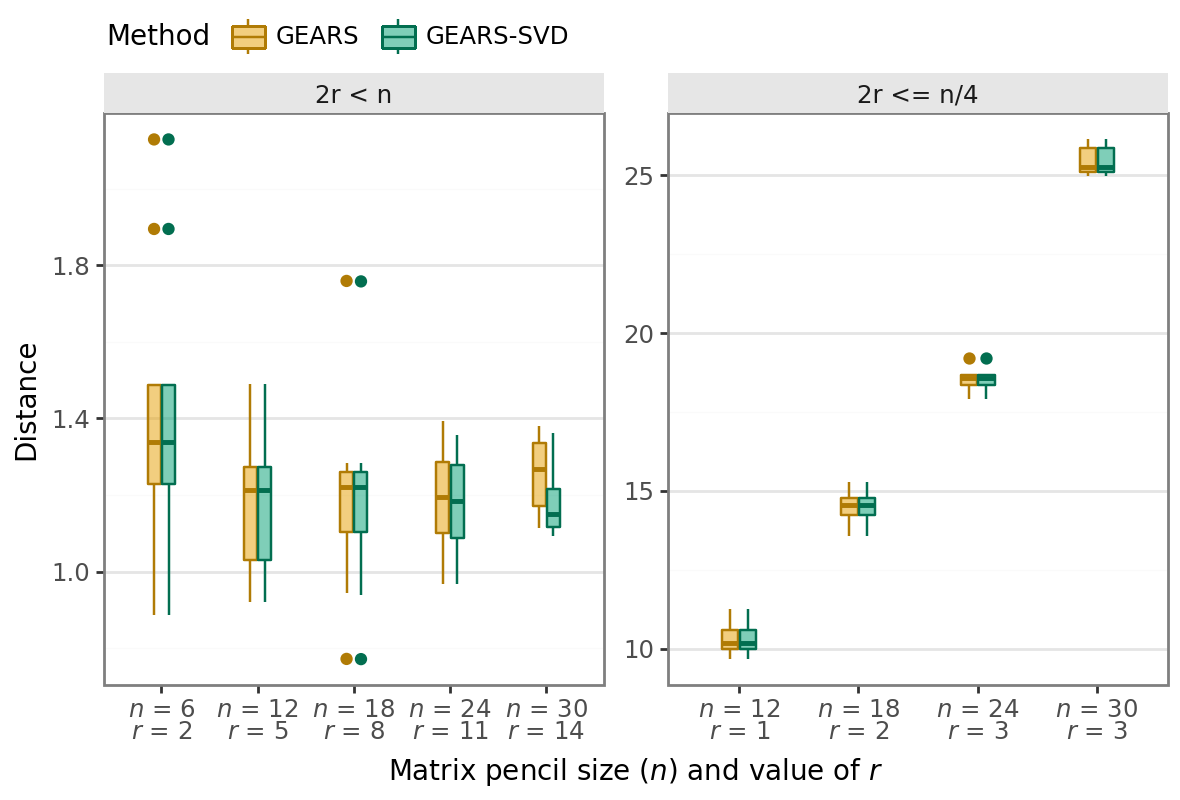}
    \caption{Comparison of the computed minimal distances for random complex skew-symmetric matrix pencils of size $n \times n$ to the set of skew-symmetric pencils with rank $\leq 2r$. The only noticeable difference is for the largest sizes.} 
    \label{fig:pencil_dist}
\end{figure}

\begin{figure}
    \centering
    \includegraphics[width=0.92\linewidth]{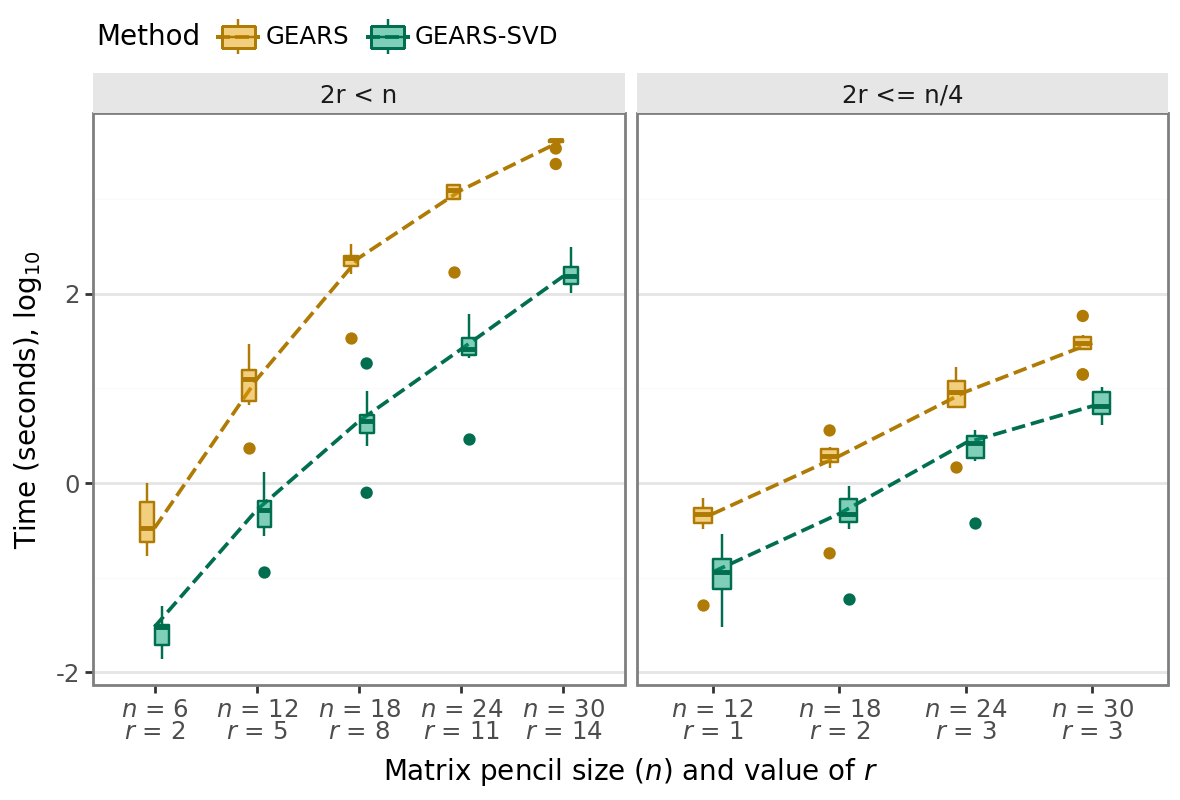}
    \caption{
    Comparison of the computation time for determining the distance to the set of skew-symmetric matrix pencils with rank $\leq 2r$, for random complex skew-symmetric matrix pencils of size $n \times n$.
    GEARS-SVD outperforms GEARS.}
    \label{fig:pencil_time}
\end{figure}
 
\end{example}

\section{Conclusions and future work} \label{sect.conclusions}

We present an algorithm that approximates a given matrix polynomial by a skew-symmetric matrix polynomial of a specified rank. Our algorithm works for any rank $2r$ such that $2 \leq 2r \leq m-1$, where $m \times m$ is the matrix size, and produces a skew-symmetric matrix polynomial that is of rank $2r$ up to rounding errors. In the tested settings, it is significantly faster than the other available algorithms, which in addition are only valid for the particular case when $2r$ in the largest even number smaller than $m$.

Extensions of this result to other classes of structured matrix polynomials, such as symmetric, symmetric/skew-symmetric, or palindromic, are part of our future work.

\medskip

\noindent{\bf Acknowledgements.} 

The work of A. Dmytryshyn was supported by the Swedish Research Council (VR) grant 2021-05393.
The work of F. M. Dopico was partially supported by the grant PID2023-147366NB-I00 funded by the Agencia Estatal de Investigaci\'on of Spain MICIU/AEI/10.13039/501100011033 and FEDER/UE.

{\footnotesize
\bibliographystyle{abbrv}
\bibliography{library}
}

\end{document}